\documentclass[11pt]{article}

\usepackage{mathrsfs,amssymb,amsmath,amsfonts}
\usepackage{amsthm}
\usepackage{bbm}
\usepackage{tipa}
\usepackage{enumerate}
\usepackage{color}
\usepackage[titletoc,title]{appendix}
\usepackage{cite}
\usepackage[hyphens]{url}
\usepackage[bookmarks=false,hidelinks]{hyperref}

\usepackage{stmaryrd}

\usepackage{arydshln}
\usepackage{indentfirst}
\def\sqr#1#2{{\vcenter{\vbox{\hrule height.#2pt
              \hbox{\vrule width.#2pt height#1pt \kern#1pt \vrule width.#2pt}
              \hrule height.#2pt}}}}

\def\3n{\negthinspace \negthinspace \negthinspace }
\def\2n{\negthinspace \negthinspace }
\def\1n{\negthinspace }

\def\dbE{\mathbb{E}}
\def\dbF{\mathbb{F}}

\def\dbP{\mathbb{P}}
\def\dbQ{\mathbb{Q}}
\def\dbR{\mathbb{R}}

\def\={\buildrel \triangle \over =}

\def\ds{\displaystyle}

\def\ns{\noalign{\ss}}
\def\a{\alpha}
\def\b{\beta}
\def\g{\gamma}

\def\si{\sigma}
\def\t{\tau}
\def\f{\varphi}

\def\i{\infty}
\def\G{\Gamma}
\def\D{\Delta}

\def\O{\Omega}

\def\cA{{\cal A}}
\def\cB{{\cal B}}

\def\cF{{\cal F}}

\def\cH{{\cal H}}

\def\cS{{\cal S}}

\def\cU{{\cal U}}
\def\cV{{\cal V}}

\def\no{\noindent}

\def\ss{\smallskip}
\def\ms{\medskip}

\def\q{\quad}
\def\qq{\qquad}

\def\limsup{\mathop{\overline{\rm lim}}}
\def\liminf{\mathop{\underline{\rm lim}}}

\def\esssup{\mathop{\rm esssup}}
\def\essinf{\mathop{\rm essinf}}

\def\wt{\widetilde}

\def\cd{\cdot}

\def\les{\leqslant}
\def\ges{\geqslant}

\def\({\Big (}
\def\){\Big )}
\def\[{\Big[}
\def\]{\Big]}
\def\bde{\begin{definition}}
\def\ede{\end{definition}}
\def\be{\begin{equation}}
\def\bel{\begin{equation}\label}
\def\ee{\end{equation}}
\def\bt{\begin{theorem}}
\def\et{\end{theorem}}
\def\bc{\begin{corollary}}
\def\ec{\end{corollary}}
\def\bl{\begin{lemma}}
\def\el{\end{lemma}}
\def\bp{\begin{proposition}}
\def\ep{\end{proposition}}
\def\bas{\begin{assumption}}
\def\eas{\end{assumption}}
\def\br{\begin{remark}}
\def\er{\end{remark}}
\def\ba{\begin{array}}
\def\ea{\end{array}}
\def\ed{\end{document}}

\def\square#1{\vbox{\hrule\hbox{\vrule height#1%
     \kern#1\vrule}\hrule}}
\def\rectangle#1#2{\vbox{\hrule\hbox{\vrule height#1%
     \kern#2\vrule}\hrule}}

\font\tenbb=msbm10 \font\sevenbb=msbm7 \font\fivebb=msbm5

\newfam\bbfam
\scriptscriptfont\bbfam=\fivebb \textfont\bbfam=\tenbb
\scriptfont\bbfam=\sevenbb

\def\md{{\mathrm d}}

\newtheorem{assumption}{Assumption}

\newtheorem{theorem}{Theorem}[section]
\newtheorem{lemma}[theorem]{Lemma}
\newtheorem{definition}[theorem]{Definition}
\newtheorem{proposition}[theorem]{Proposition}
\newtheorem{corollary}[theorem]{Corollary}
\newtheorem{remark}[theorem]{Remark}

\theoremstyle{definition}

\newenvironment{keywords}{{\bf Key words: }}{}

\numberwithin{equation}{section}

\newcommand{\udots}{\mathinner{\mskip1mu\raise1pt\vbox{\kern7pt\hbox{.}}
\mskip2mu\raise4pt\hbox{.}\mskip2mu\raise7pt\hbox{.}\mskip1mu}}

\begin{document}

\title{Infinite-Horizon Non-Autonomous Zero-Sum Stochastic Recursive Differential Games and HJBI Equations\footnote{This work is supported by  the National Key R\&D Program of China (No. 2023YFA1009002),  the
National Natural Science Foundation of China (No. 12371443) and the Changbai
Talent Program of Jilin Province.}}

\author{Sheng Huang\footnote{School of Mathematics and Statistics, Northeast Normal University, Changchun 130024, P. R. China; {E-mail: huangsheng9910@163.com}}\qq
Qingmeng Wei\footnote{Corresponding author. School of Mathematics and Statistics, Northeast Normal University, Changchun 130024, P. R. China; E-mail:  weiqm100@nenu.edu.cn}
}

\maketitle

\begin{abstract}

In this paper, we study an infinite horizon non-autonomous stochastic recursive
differential game. To this end, we  first establish well-posedness and stability results for
BSDEs with a time-dependent discount factor and a possibly unbounded random
terminal time. The generator $f$ is allowed to be non-uniformly bounded at the origin, namely,
$ 
|f(t,0,0)|\les \beta_1(t)+\beta_2,
$ $t\in[0,\infty),$ $\dbP\text{-a.s.},
$ 
with $\beta_1\in L^1(0,\infty)$ and $\beta_2\ges0$.
We then formulate a two-person zero-sum stochastic recursive differential game
on the infinite horizon, where the drift, diffusion, generator and discount
factor may depend explicitly on time. The lower and upper value functions are
defined through Elliott--Kalton nonanticipative strategies and BSDE recursive
payoffs. By finite horizon approximation, BSDE stability estimates and viscosity
solution arguments, we prove that both the lower and upper value functions are
deterministic and are the unique bounded viscosity solutions of their
corresponding non-autonomous HJBI equations.
Finally, the time-homogeneous case is recovered as a special case. Using the
uniqueness of the non-autonomous HJBI equation, rather than a probabilistic
shift argument, we show from the PDE viewpoint that the value functions of the
autonomous system are independent of the initial time and solve  the
corresponding stationary HJBI equations in the viscosity sense.
\end{abstract}

 \begin{keywords}
Infinite horizon stochastic differential game; non-autonomous system;
BSDE recursive payoff; HJBI equation; viscosity solution.
\end{keywords}

 {\bf AMS subject classification:} 91A15,  60H10, 49L25.

\section{Introduction}

 Stochastic control and stochastic differential games provide powerful frameworks
for studying dynamic decision-making problems under uncertainty. In stochastic
control, a single decision-maker seeks to optimize a performance criterion,
whereas stochastic differential games model strategic interactions among
several decision makers.
Since the pioneering works in these fields, it has been well understood that the
dynamic programming principle (DPP) naturally leads to Hamilton--Jacobi--Bellman (HJB)
equations in stochastic control problems, and to
Hamilton--Jacobi--Bellman--Isaacs (HJBI) equations in zero-sum stochastic
differential games;  
see, for instance,
\cite{Krylov-1980,Fleming-Soner-2006,Yong-Zhou-1999, Pham-2009} for stochastic control
problems, and
\cite{Nisio-1988,Fleming-Souganidis-1989,Swiech-1996} for stochastic
differential games.
Since the seminal work of Pardoux and Peng \cite{Pardoux-Peng-1990}, backward
stochastic differential equations (BSDEs) have become a fundamental tool in
stochastic control, stochastic differential games, mathematical finance, and
related fields. When the payoff functional of a control or game problem is
defined through a BSDE, the associated value function is naturally linked to a
nonlinear HJB or HJBI equation. This connection has been extensively studied in
finite horizon stochastic control and differential games; see, among others,
\cite{Peng-1997, Pardoux-1998, Buckdahn-Li-2008,Li-Peng-2009,Buckdahn-Hu-Li-2011,
Li-Wei-2014,Li-Wei-2015,Li-Li-Wei-2021, Wei-Yu-2021}.

In many applications, however, the decision-making process does not necessarily
terminate at a prescribed deterministic time. The terminal time may be
determined by a stopping rule, as in optimal stopping problems, or by a default
event, as in credit-risk models; see, for instance,
\cite{Bielecki-Rutkowski-2004}. Alternatively, the problem may be formulated
over an infinite horizon, as in long-term investment, risk-sensitive control,
and economic differential games; we refer to
\cite{Fleming-McEneaney-1995,Fleming-Sheu-2002,
Dockner-Jorgensen-Long-Sorger-2000,Peskir-Shiryaev-2006}
for related studies.
When the payoff is described in a recursive way, such random or unbounded time
horizons naturally lead to BSDEs on random or infinite time intervals. Compared
with the finite horizon case, these BSDEs require additional assumptions to
ensure well-posedness, since the terminal time may be unbounded or may even
take the value infinity. Typical assumptions involve suitable integrability,
monotonicity, dissipativity, and discounting conditions. BSDEs with random
terminal times and related infinite horizon problems have been studied in
various settings.

Early work on this topic goes back to Peng \cite{Peng-1991}, where BSDEs with
random terminal times appeared in the probabilistic representation of
quasilinear PDEs. Darling and Pardoux \cite{Darling-Pardoux-1997} subsequently
established well-posedness results for such BSDEs and applied them to
semilinear elliptic PDEs. For a broader account of related BSDE--PDE
connections, we refer to Pardoux \cite{Pardoux-1998}.
Briand and Hu \cite{Briand-Hu-1998} further developed
stability results for such BSDEs and used them to study the homogenization of
systems of semilinear elliptic PDEs. Royer \cite{Royer-2004} studied
one-dimensional BSDEs with random terminal times driven by monotone generators,
extending the analysis from strictly monotone generators to merely monotone
ones, and investigated the corresponding links with elliptic PDEs. Quadratic
BSDEs with random terminal times, including the infinite horizon case, were
studied by Briand and Confortola \cite{Briand-Confortola-2008}, who related
them to elliptic PDEs in infinite-dimensional spaces. More recently, Lin et al. \cite{Lin-Ren-Touzi-Yang-2020} established a well-posedness
theory for second-order BSDEs with random terminal times.
From the infinite horizon viewpoint, Hu and Tessitore
\cite{Hu-Tessitore-2007} studied BSDEs and their applications to elliptic PDEs
in Hilbert spaces and optimal control problems.
In the recursive setting, Li and Zhao \cite{Li-Zhao-2019} investigated
vanishing-discount limits for nonexpansive stochastic control systems, and
Buckdahn, Li and Zhao \cite{Buckdahn-Li-Zhao-2021} extended this problem to
stochastic differential games, characterizing the limit values through HJBI
equations and dynamic programming arguments. Luo, Li and Wei
\cite{Luo-Li-Wei-2025} studied infinite horizon stochastic recursive control
problems with jumps and characterized the value function as a viscosity solution
of the associated HJB integro-differential equation.
Taken together, these works provide the BSDE estimates and PDE tools needed for
the analysis of recursive control and differential games on random or infinite
horizons. However, most of these studies are carried out in time-homogeneous or
constant-discount settings.

 Non-autonomous control and game problems arise naturally in economic and
financial applications, where the dynamics, costs, or discount mechanism may
vary with time. Deterministic infinite horizon non-autonomous optimal control
problems have been studied, for instance, by  Baumeister, Leit\~ao and Silva
\cite{Baumeister-Leitao-Silva-2007}, and Basco and Frankowska
\cite{Basco-Frankowska-2019}.
In the stochastic setting, however, the literature on 
infinite horizon non-autonomous problems is still rather limited. Wei and Yong
\cite{Wei-Yong-2025} recently studied an infinite horizon time-inconsistent
stochastic control problem, in which a non-autonomous time-consistent stochastic
control problem on an infinite horizon plays a key role. The value function of
this auxiliary problem is characterized by the associated time-dependent HJB
equation in the classical-solution framework.
Although this auxiliary control problem is not
recursive and does not involve BSDE payoffs, it naturally points to the
importance of developing a viscosity-solution framework for infinite horizon
non-autonomous problems in the stochastic setting.

 These two lines of research motivate the present work, in which we
study an infinite horizon non-autonomous stochastic recursive differential
game. 
Such a formulation calls for a BSDE framework adapted to time-dependent
coefficients and time-dependent discounting. We therefore first establish
well-posedness and stability results for BSDEs on random, possibly unbounded,
time intervals under this non-autonomous structure. In particular, the generator
is allowed to satisfy a non-uniform bound at the origin, for instance
$$
|f(t,0,0)|\les \beta_1(t)+\beta_2,\quad \dbP\mbox{-a.s.},
\qquad
\beta_1\in L^1(0,\infty),\quad \beta_2\ges 0.
$$
Our analysis adapts the truncation and approximation ideas developed for BSDEs
with random terminal times in
\cite{Darling-Pardoux-1997, Pardoux-1998, Briand-Hu-1998, Royer-2004}.
Although the argument relies on standard BSDE techniques, we record the main
estimates obtained along the way, since they will be used repeatedly in the
subsequent analysis of the game.

With this BSDE framework in hand, we proceed to the associated game problem.
The lower and upper value functions are formulated by using Elliott--Kalton type
nonanticipative strategies against admissible controls; see, for instance,
\cite{Fleming-Souganidis-1989,Buckdahn-Li-2008}. 
Since the coefficients and the discount factor may depend explicitly on time,
the value functions depend on both the initial time and the initial state, and
the associated equations are genuinely  infinite horizon non-autonomous  HJBI
equations.

We adopt a PDE approximation approach.  Instead of deriving the
DPP directly on the unbounded time
interval,   we
approximate the infinite horizon game by truncated finite horizon games with
zero terminal condition at the truncation time. The corresponding finite horizon
value functions solve finite horizon HJBI equations in the viscosity sense. Letting the truncation horizon tend to infinity and using BSDE stability
estimates, we identify the   infinite horizon non-autonomous  value functions   as the unique bounded viscosity solutions of the associated
infinite horizon HJBI equations. This PDE approximation avoids a direct dynamic
programming analysis on the unbounded time interval.

Moreover, our study also explains how the time-homogeneous case is recovered
from the non-autonomous formulation. When the coefficients are time-independent
and the discount factor is constant, the uniqueness of viscosity solutions
implies that the value functions are independent of the initial time, and the
associated HJBI equations reduce to stationary ones.
This shows that the non-autonomous formulation is consistent with the usual
stationary formulation of time-homogeneous infinite horizon control and game
problems, where the value functions depend only on the initial state and solve
stationary HJB/HJBI equations, refer to
\cite{Hu-Tessitore-2007, Li-Zhao-2019, Buckdahn-Li-Zhao-2021, Luo-Li-Wei-2025}, etc.

The rest of this paper is organized as follows. Section 2 establishes
well-posedness and stability estimates for BSDEs on random, possibly unbounded,
time intervals with a time-dependent discount factor. Section 3 is devoted to the infinite horizon non-autonomous zero-sum
stochastic recursive differential game. We introduce Elliott--Kalton type
strategies, define the lower and upper value functions through BSDE recursive
payoffs, and then characterize these value functions, by means of finite
horizon approximations, as the unique bounded viscosity solutions of the
associated non-autonomous HJBI equations. The
time-homogeneous case is then recovered as a special case, yielding the
corresponding stationary HJBI equations.

\section{BSDEs with Time-Dependent Discounting and Random Terminal Times}

We first collect some notation   used throughout the paper.
Let us begin with the underlying filtered probability space
$(\Omega,\mathcal{F},\mathbb{F},\mathbb{P})$,
equipped with a $d$-dimensional standard Brownian motion
$B=\{B_t\}_{t\ges0}$, where
$\mathbb{F}=\{\mathcal{F}_t\}_{t\ges0}$
is the natural filtration generated by $B$ and augmented by all $\dbP$-null sets.

For each $t\ges 0$, let $\mathcal S_t$ denote the set of all
$\mathbb F$-stopping times $\tau$ such that $t\les \tau\les\infty$,
$\mathbb P$-a.s., and set $\mathcal S:=\mathcal S_0$. For
$t\ges0$ and $\tau\in\mathcal S_t$, define  the random interval
$ \ds
\llbracket t,\tau\rrbracket
:=
\big\{(s,\omega)\in [0,\infty)\times\Omega:
t\les s\les \tau(\omega)\big\}.
$ 
Then, we introduce the following spaces.
\begin{itemize}
\item $L^1(t,\infty;\dbR)$: the set of all Borel measurable functions
$\varphi:[t,\infty)\to\mathbb{R}$ such that
$$\ds
\lVert\varphi\rVert_{L^1}:=\int_t^\infty |\varphi(s)|\,\md s<\infty.
$$

\item $L^\infty_{\mathcal{F}_\tau}(\mathbb{R})$: the set of all
$\mathcal{F}_\tau$-measurable $\mathbb{R}$-valued random variables $\xi$ such that
$$
\lVert\xi\rVert_{\i}:=\esssup_{\omega\in\Omega}|\xi(\omega)|<\infty.
$$

\item $L^\infty_{\mathbb{F}}(t,\tau;\mathbb{R})$: the set of all
$\mathbb{F}$-progressively measurable $\mathbb{R}$-valued processes
$\f=\{\f_s\}_{s\ges t}$ such that
$
\esssup\limits_{( s,\omega)\in\llbracket t,\tau\rrbracket }|\f_s(\omega)|<\infty.
$

\item $\cH^2_{\mathrm{loc}}(t,\tau;\mathbb{R}^d)$: the set of all
$\mathbb{F}$-progressively measurable $\mathbb{R}^d$-valued processes
$\f=\{\f_s\}_{s\ges t}$ such that, for every $T>t$,
$ \ds
\mathbb{E}\int_t^{T\wedge\tau}|\f_s|^2\,\md s<\infty.
$

     \item  For a subset $D$ of a  Euclidean space, $C_b(D)$, $\mathrm{LSC}\,(D)$ and
$\mathrm{USC}\,(D)$ denote the spaces of bounded continuous, lower semicontinuous and
upper semicontinuous real-valued functions on $D$, respectively.

\end{itemize}
 We shall use the notation
$ 
\mathbb E_s^{\mathbb Q}[\cdot]
:=
\mathbb E^{\mathbb Q}[\cdot\mid\mathcal F_s],
$ 
for any probability measure $\mathbb Q$ and  $s\ges 0$.

%


In what follows, we study a class of BSDEs with time-dependent discounting
and a random terminal time. The generator is allowed to have a time-dependent
growth bound, which is not necessarily uniform in time.

More precisely, let $\tau\in\cS$, and let $\xi\in L^\infty_{\mathcal F_\tau}(\mathbb R)$ be a
terminal random variable. We consider the following BSDE:
\begin{equation}
\label{A.2-tau}
\left\{\2n
\begin{aligned}
&Y_{s\wedge\tau}
 =
Y_{T\wedge\tau}
+
\int_{s\wedge\tau}^{T\wedge\tau}
\big(f(r, \rho(r)Y_r,Z_r)-\rho(r)Y_r\big)\,\md r
-
\int_{s\wedge\tau}^{T\wedge\tau} Z_r\,\md B_r,
\quad 0\les s\les T<\infty,
\\
&Y_{\tau} =\xi ,\quad \text{on }\{\tau<\infty\}.
\end{aligned}
\right.
\end{equation}
On $\{\tau=\infty\}$, every discounted terminal term is understood to
be zero; that is, for every finite time $r\ges0$,
$ 
\Gamma_{r,\tau}\xi=0$ on $\{\tau=\infty\},
$  
where 
\bel{Gamma}\ds
\Gamma_{t,s}:=\exp\(-\int_t^s \rho(r)\,\md r\),\q
 s\ges t\ges 0.
 \ee
The generator   $f:\O\times [0,\i)\times \dbR\times\dbR^d\to\dbR$   and the discount  factor $\rho:[0,\i)\to[0,\i)$  are assumed to satisfy the following conditions.
\begin{itemize}
\item[\bf(A1)]  For each $(y,z)\in\mathbb R\times\mathbb R^d$, $f(\cdot,\cdot,y,z)$ is   $\dbF$-progressively measurable, and the map $t\mapsto f(t,y,z)$ is continuous. Moreover, there exist constants $L_y,L_z\ges0$ such that,   for all $t\ges 0$,  $y_1,y_2\in \mathbb{R}$, and $z_1,z_2\in \mathbb{R}^d$,
$$
|f(t,y_1,z_1)-f(t,y_2,z_2)|
\les L_y|y_1-y_2|+L_z|z_1-z_2|,\ \dbP\mbox{-a.s.}
$$
Moreover, for all $t\ges 0$, $y_1,y_2\in \mathbb{R}$, and $z\in \mathbb{R}^d$,
$
\bigl(f(t,y_1,z)-f(t,y_2,z)\bigr)(y_1-y_2)\les 0,\ \dbP\mbox{-a.s.}
$

\item[\bf(A2)] There exist a measurable function $\beta_1:[0,\infty)\to[0,\infty)$ with $\beta_1\in L^1(0,\infty)$, and a constant $\beta_2\ges 0$ such that
$
|f(t,0 ,0)|\les \beta_1(t)+\beta_2,
$
for all $t\in [0,\infty)$, $\dbP$-a.s.

\item[\bf(A3)] The discount factor $\rho:[0,\infty)\to [0,\i)$ is continuous and there exists a constant $ \rho_0>0$ such that
$
\rho(s)\ges  \rho_0, $ for all $s\ges 0.
$

\end{itemize}

  First, we establish an a priori estimate for the BSDE \eqref{A.2-tau}.

\begin{proposition} \label{Pro-A.3}\sl
Assume  $\rho$ satisfies {\bf(A3)}.  For $i=1,2$, let $\xi_i\in L^{\i}_{\cF_\t}(\mathbb R)$ and $(Y^i,Z^i)\in L^\infty_{\mathbb{F}}(0,\t;\mathbb{R})\times \cH^2_{\mathrm{loc}}(0,\t;\mathbb{R}^d)$   be the solutions of BSDE \eqref{A.2-tau} associated with $(f_i,\xi_i) $,  respectively, where $f_i$ satisfies {\bf (A1)} and {\bf (A2)} with the same constants.
Then  there exists a probability measure $\mathbb Q$, locally equivalent to
$\mathbb P$, such that, for all $s\ges0$,
$$
|Y_{s\wedge\tau}^1-Y_{s\wedge\tau}^2| 
\les
 \mathbb E_{s\wedge\tau}^{\mathbb Q}
\[
\Gamma_{s\wedge\tau,\tau} 
|\xi_1-\xi_2| \mathbf 1_{\{\tau<\infty\}}
+
\int_{s\wedge\tau}^{\tau}
\Gamma_{s\wedge\tau,r}|\delta f_r| \,\md r
\].
$$
 Moreover, there exists a constant $C=C(\rho_0)>0$, independent of
$\tau$ and $\xi_i$, such that
$$
\mathbb E_{s\wedge\tau}^{\mathbb Q}
\[
\int_{s\wedge\tau}^{\tau}
\Gamma_{s\wedge\tau,r}|Z_{r}^1-Z_{r}^2|^2\,\md r
\]
\les
C\mathbb E_{s\wedge\tau}^{\mathbb Q}
\[
\Gamma_{s\wedge\tau,\tau}
|\xi_1-\xi_2|^2\mathbf 1_{\{\tau<\infty\}}
+
\int_{s\wedge\tau}^{\tau}
\Gamma_{s\wedge\tau,r}|\delta f_r|^2\,\md r
\],
$$
where
$ 
\delta f_r
:=
f_1(r,\rho(r)Y_r^2,Z_r^2)
-
f_2(r,\rho(r)Y_r^2,Z_r^2).
$

\end{proposition}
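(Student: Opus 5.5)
We linearize the equation for the difference and then remove the linear $y$- and $z$-terms with a Girsanov change of measure combined with the discount factor $\Gamma$. Set $\delta Y:=Y^1-Y^2$, $\delta Z:=Z^1-Z^2$, $\delta\xi:=\xi_1-\xi_2$, and write
$$
f_1(r,\rho Y^1_r,Z^1_r)-f_2(r,\rho Y^2_r,Z^2_r)=a_r\rho(r)\delta Y_r+b_r\cdot\delta Z_r+\delta f_r .
$$
Here
$$
a_r:=\frac{f_1(r,\rho Y^1_r,Z^1_r)-f_1(r,\rho Y^2_r,Z^1_r)}{\rho(r)\delta Y_r}\mathbf 1_{\{\delta Y_r\neq0\}},
$$
and $b_r$ is the analogous bounded difference quotient in $z$, taken coordinate by coordinate. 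By {\bf(A1)}, $a$ and $b$ are progressively measurable, $a_r\les0$ by monotonicity, $|a_r|\les L_y$, and $|b_r|\les L_z$. The difference then satisfies a linear BSDE with generator $(a_r-1)\rho(r)\delta Y_r+b_r\delta Z_r+\delta f_r$.

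Define $\mathbb Q$ on each $\mathcal F_T$ by the density $\mathcal E(\int_0^{\cdot} b\,\md B)_T$. Since $b$ is bounded, Novikov's condition holds on every finite horizon, which gives a probability measure locally equivalent to $\mathbb P$. Under $\mathbb Q$, $\wt B:=B-\int_0^\cdot b_r\,\md r$ is a Brownian motion. Now apply Itô's formula to $\wt\Gamma_{t,r}\delta Y_r$, where $t=s\wedge\tau$ and $\wt\Gamma_{t,r}:=\exp(-\int_t^r(1-a_u)\rho(u)\md u)\les\Gamma_{t,r}$ because $a\les0$. The $y$-terms cancel, which gives
$$
\delta Y_{t}=\wt\Gamma_{t,T\wedge\tau}\delta Y_{T\wedge\tau}+\int_t^{T\wedge\tau}\wt\Gamma_{t,r}\delta f_r\,\md r-\int_t^{T\wedge\tau}\wt\Gamma_{t,r}\delta Z_r\,\md\wt B_r .
$$
The stochastic integral is a true $\mathbb Q$-martingale on $[t,T]$: $\delta Z\in\cH^2_{\mathrm{loc}}$ under $\mathbb P$, and a BMO-type argument for bounded $Y$ (via Itô on $|\delta Y|^2$) shows $\delta Z$ is locally in $\cH^2$ under $\mathbb Q$ as well. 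Alternatively, one can localize with stopping times and pass to the limit using boundedness of $\delta Y$. Taking $\mathbb E^{\mathbb Q}_t$ and absolute values, and using $\wt\Gamma\les\Gamma$, we get the bound with $\mathbb E^{\mathbb Q}_t[\Gamma_{t,T\wedge\tau}|\delta Y_{T\wedge\tau}|]$ in place of the terminal term. Let $T\to\infty$. On $\{\tau<\infty\}$, this term converges to $\Gamma_{t,\tau}|\delta\xi|$. On $\{\tau=\infty\}$, it is bounded by $e^{-\rho_0(T-t)}\|\delta Y\|_\infty\to0$. Dominated convergence (conditional, under $\mathbb Q$) and monotone convergence for the $\delta f$ integral give the first estimate.

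For the $Z$-estimate, apply Itô's formula to $\Gamma_{t,r}|\delta Y_r|^2$ under $\mathbb Q$:
$$
\md(\Gamma|\delta Y|^2)=\Gamma\big[-\rho|\delta Y|^2+2(1-a)\rho|\delta Y|^2-2\delta Y\delta f+|\delta Z|^2\big]\md r+\text{mart.}
$$
Integrating from $t$ to $T\wedge\tau$ and taking $\mathbb E^{\mathbb Q}_t$ gives
$$
\mathbb E^{\mathbb Q}_t\int_t^{T\wedge\tau}\Gamma|\delta Z|^2\md r\les \mathbb E^{\mathbb Q}_t\Big[\Gamma_{t,T\wedge\tau}|\delta Y_{T\wedge\tau}|^2+\int_t^{T\wedge\tau}\Gamma\big(\rho|\delta Y|^2+2|\delta Y||\delta f|\big)\md r\Big].
$$
Here the drift term $-(1-2a)\rho|\delta Y|^2\les0$ was dropped after moving it to the right-hand side.

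This is the main obstacle. The remaining $|\delta Y|^2$ integral cannot be controlled by a Young inequality alone, since $\rho$ may be unbounded and the constant must depend only on $\rho_0$. So I would instead keep the drift term with its good sign, namely the term $\Gamma(\rho-2a\rho)|\delta Y|^2\ges\rho\Gamma|\delta Y|^2$ moved to the left-hand side. The product term is then handled by Young with $\rho$-weights, $2|\delta Y||\delta f|\les\frac{\rho}{2}|\delta Y|^2+\frac{2}{\rho}|\delta f|^2$, and $\frac{2}{\rho}\les\frac{2}{\rho_0}$. This gives
$$
\mathbb E^{\mathbb Q}_t\int\Gamma(|\delta Z|^2+\tfrac{\rho}{2}|\delta Y|^2)\les\mathbb E^{\mathbb Q}_t\big[\Gamma_{t,T\wedge\tau}|\delta Y_{T\wedge\tau}|^2\big]+\tfrac{2}{\rho_0}\mathbb E^{\mathbb Q}_t\int\Gamma|\delta f|^2 .
$$
Letting $T\to\infty$ exactly as before yields the claim with $C=\max(1,2/\rho_0)$.
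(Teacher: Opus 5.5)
Your proof is correct. For the $Z$-estimate you end up doing what the paper does. You apply Itô's formula to $\Gamma|\delta Y|^2$ under $\mathbb Q$, keep $\rho\,\Gamma|\delta Y|^2$ on the left, and absorb the cross term with a $\rho$-weighted Young inequality. The paper uses $2\Delta Y\,\delta f\le \rho_0|\Delta Y|^2+\rho_0^{-1}|\delta f|^2\le\rho|\Delta Y|^2+\rho_0^{-1}|\delta f|^2$ and gets $C=\max(1,1/\rho_0)$. Your first display, with $\rho|\delta Y|^2$ on the right, is a self-inflicted detour: the good-sign term should never be moved across in the first place.

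The genuine difference is in the first estimate. The paper linearizes only in $z$ and keeps $I^y_r$ nonlinear. It then applies Tanaka's formula to $\Gamma_{s\wedge\tau,r}|\Delta Y_r|$ and discards $\operatorname{sgn}(\Delta Y_r)I^y_r\le 0$ and the local-time term by sign. You linearize in $y$ as well and use the explicit linear-BSDE representation with the modified discount $\widetilde\Gamma\le\Gamma$, which is exactly where $a\le0$ enters.

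Your route needs only Itô's product rule. It also yields a signed identity for $\delta Y_t$, so you get one-sided comparison for free, at the cost of introducing the difference quotient $a_r$. The paper's route avoids that quotient but needs Tanaka's formula and local time. Incidentally, your representation argument is the one the paper itself uses in Step 2 of Lemma~\ref{LemmaA-1}. You also justify that the stochastic integrals are true $\mathbb Q$-martingales (BMO or localization), which the paper passes over silently.

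One small point: coordinatewise difference quotients give $|b_r|\le\sqrt d\,L_z$, not $L_z$. This is harmless for the proposition. However, the paper's choice $\gamma_r=I^z_r\Delta Z_r/|\Delta Z_r|^2$ gives $|\gamma_r|\le L_z$ exactly, and that sharp bound is needed later when the same $\mathbb Q$ is reused under \textbf{(A8)} in Lemma~\ref{LemmaA-2}.
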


\begin{proof}
Set
$
\Delta Y:=Y^1-Y^2,$ $\Delta Z:=Z^1-Z^2,
$
Then, for any $T>s\ges 0$,
\bel{1-Delta-Y}
\Delta Y_{s\wedge\tau}
=
\Delta Y_{T\wedge\tau}
+
\int_{s\wedge\tau}^{T\wedge\tau}
\big(I_r^y+I_r^z+\delta f_r-\rho(r)\Delta Y_r\big)\,\md r
-
\int_{s\wedge\tau}^{T\wedge\tau}\Delta Z_r\,\md B_r,\ 0\les s\les T<\infty,
\ee
where
$
I_r^y
:=
f_1(r,\rho(r)Y^1_r,Z^1_r)
-
f_1(r,\rho(r)Y^2_r,Z^1_r),
$ $
I_r^z
:=
f_1(r,\rho(r)Y^2_r,Z^1_r)
-
f_1(r,\rho(r)Y^2_r,Z^2_r),
$
 $r\ges 0$.

For $r\ges 0$, define
$$
\gamma_r
:=
\begin{cases}
\dfrac{I_r^z}{|\Delta Z_r|^2}\,\Delta Z_r, & \Delta Z_r\neq 0,\\[1ex]
0, & \Delta Z_r=0.
\end{cases}
$$
By {\bf (A1)}, $|\gamma_r |\les L_z$, for all $r\ges 0$.
Then for each fixed $T>0$, define a probability measure $\mathbb Q^T$ on
$\mathcal F_{T }$ by
$$
\frac{\md \mathbb Q^T}{\md \mathbb P}\Big|_{\mathcal F_{T }}
=
\exp\(
\int_0^{T\wedge\tau }\gamma_r\,\md B_r
-\frac12\int_0^{T\wedge\tau}|\gamma_r|^2\,\md r
\).
$$
which is a true martingale on each finite interval. By Girsanov's theorem, the process
$ \ds
B_s^{\mathbb Q^T}
:=
B_s-\int_0^{s\wedge\tau}\gamma_r\,\md r,
$ $ 0\les s\les T,
$
is a $d$-dimensional Brownian motion under $\mathbb Q^T$.
Moreover, the family $(\mathbb Q^T)_{T>0}$ is consistent, in the sense that for $0<S<T$,
$
\mathbb Q^T\big|_{\mathcal F_{S}}=\mathbb Q^S.
$
Hence there exists a unique probability measure $\mathbb Q$ on
$
\mathcal F_\i:=\sigma\(\bigcup_{T>0}\mathcal F_{T }\)
$
such that
$
\mathbb Q\big|_{\mathcal F_{T }}=\mathbb Q^T,$ $ \forall T>0.
$
In particular, for each fixed $T>0$, the process
$ \ds
B_s^{\mathbb Q}
:=
B_s-\int_0^{s\wedge\tau}\gamma_r\,\md r,
 $ $ 0\les s\les T,
$
is a $d$-dimensional Brownian motion under $\mathbb Q$ on $[0,T]$.
 Therefore, in what follows, we work under the probability measure $\mathbb Q$ and write $ \ds
B_s^{\mathbb Q}
=
B_s-\int_0^{s\wedge\tau}\gamma_r\,\md r,
$ $s\ges0.
$

Therefore,  \eqref{1-Delta-Y}  can be rewritten as
\bel{Delta-Y-1}
\Delta Y_{s\wedge\tau}
=
\Delta Y_{T\wedge\tau}
+
\int_{s\wedge\tau}^{T\wedge\tau}
\bigl(I_r^y +\delta f_r-\rho(r)\Delta Y_r\bigr)\,\md r
-
\int_{s\wedge\tau}^{T\wedge\tau}\Delta Z_r\,\md B_r^ {\mathbb Q }, \ 0\les s\les T<\infty.
\ee
Applying Tanaka's formula to
$\Gamma_{{s\wedge\tau},r}|\Delta Y_r|$ on $\llbracket s\wedge\tau,T\wedge\tau\rrbracket$, we get
\bel{Delta-Y-Tana}
\begin{aligned}
|\Delta Y_{s\wedge\tau}|
&= \mathbb E_{s\wedge\tau}^{\mathbb Q}
 \[
\Gamma_{{s\wedge\tau},T\wedge\tau}|\Delta Y_{T\wedge\tau}|
+
\int_{s\wedge\tau}^{T\wedge\tau}
\Gamma_{{s\wedge\tau},r}\operatorname{sgn}(\Delta Y_r)
(I_r^y+\delta f_r)\,\md r
\\
&\qquad\qq
-
\int_{s\wedge\tau}^{T\wedge\tau}
\Gamma_{{s\wedge\tau},r}\operatorname{sgn}(\Delta Y_r)
\Delta Z_r\, \md B_r^{\mathbb Q}
-
\int_{s\wedge\tau}^{T\wedge\tau}
\Gamma_{{s\wedge\tau},r}\,\md L_r^0(\Delta Y)\] \\
&\les
\mathbb E_{s\wedge\tau}^{\mathbb Q}
\[
\Gamma_{{s\wedge\tau},{T\wedge\tau}}|\Delta Y_{T\wedge\tau}|
+
\int_{s\wedge\tau}^{T\wedge\tau}
\Gamma_{{s\wedge\tau},r}|\delta f_r|\,\md r
\],
\end{aligned}
\ee
where $L^0(\Delta Y)$ denotes the local time of $\Delta Y$ at zero and
$\operatorname{sgn}(0)=0$.

Applying It\^o's formula to $\Gamma_{s\wedge\tau,r}|\Delta Y_r|^2$ on $\llbracket s\wedge\tau,T\wedge\tau\rrbracket$
and taking conditional expectation under $\mathbb Q$, we get
\bel{Delta-Y-1f}\ba{ll}
\ns\ds |\Delta Y_{s\wedge\tau}|^2
+
\mathbb E_{s\wedge \t}^{\mathbb Q}\[
\int_{s\wedge\tau}^{T\wedge\tau}\Gamma_{{s\wedge\tau},r}  |\Delta Z_r|^2\,\md r+ \int_{s\wedge\tau}^{T\wedge\tau}
\rho(r)\Gamma_{{s\wedge\tau},r}|\Delta Y_r|^2\,\md r
\] \\
\ns\ds
=
\mathbb E_{s\wedge \t}^{\mathbb Q} \[
\Gamma_{{s\wedge\tau},T\wedge\tau} |\Delta Y_{T\wedge\tau}|^2
+
2\int_{s\wedge\tau}^{T\wedge\tau}\Gamma_{{s\wedge\tau},r} \Delta Y_r (I_r^y+\delta f_r)\,\md  r\] \\
\ns\ds \les\mathbb E_{s\wedge \t}^{\mathbb Q } [
\Gamma_{{s\wedge\tau},T\wedge\tau} |\Delta Y_{T\wedge\tau}|^2
+
\frac1{\rho_0}\int_{s\wedge\tau}^{T\wedge\tau}\Gamma_{{s\wedge\tau},r}  |\delta f_r|^2\,\md r
+
\int_{s\wedge\tau}^{T\wedge\tau}\rho(r)\Gamma_{{s\wedge\tau},r} |\Delta Y_r|^2\,\md  r
\],
\ea\ee
where we have used {\bf(A1)} and the Young's inequality.
Therefore,  there exists a constant $C=C(\rho_0)>0$ such that
\bel{Delta-Y-in}
\mathbb E_{s\wedge \t}^{\mathbb Q} \[
\int_{s\wedge\tau}^{T\wedge\tau}\Gamma_{{s\wedge\tau},r} |\Delta Z_r|^2\,\md r
\]
\les C\,
\mathbb E_{s\wedge \t}^{\mathbb Q } \[
\Gamma_{{s\wedge\tau},T\wedge\tau}  |\Delta Y_{T\wedge\tau}|^2
+
\int_{s\wedge\tau}^{T\wedge\tau}\Gamma_{{s\wedge\tau},r}  |\delta f_r|^2\,\md r
\].
\ee

Finally, since $
\Delta Y_{T\wedge\tau}
=
\mathbf 1_{\{\tau\les T\}}(\xi_1-\xi_2)
+
\mathbf 1_{\{\tau>T\}}\Delta Y_T,
$ and the boundedness of $Y_1$, $Y_2$, for $\a=1,$ $2$,
it follows that
$$\ba{ll}
\ns\ds
\Gamma_{{s\wedge\tau},T\wedge\tau}  |\Delta Y_{T\wedge\tau}|^\a
\les
 \Gamma_{{s\wedge\tau},\tau} |\xi_1-\xi_2|^\a\mathbf 1_{\{\tau\les T\}}
+
 \Gamma_{{s },T} |\Delta Y_T|^\a\mathbf 1_{\{\tau>T\}}\\
\ns\ds \les
 \Gamma_{{s\wedge\tau},\tau}  |\xi_1-\xi_2|^\a\mathbf 1_{\{\tau\les T\}}
+
Ce^{- \rho_0(T-s)}\mathbf 1_{\{\tau>T\}} ,\q \dbP\mbox{-a.s.}.
\ea$$
By dominated convergence, for $\a=1,2$, we conclude that
$$
\dbE^{\dbQ}_{s\wedge \t}\big[\Gamma_{{s\wedge\tau},T\wedge\tau}  |\Delta Y_{T\wedge\tau}|^\a\big]
\to
\dbE^{\dbQ}_{s\wedge \t}\big[\Gamma_{{s\wedge\tau},\tau}  |\xi_1-\xi_2|^\a\mathbf 1_{\{\tau<\infty\}}\big] ,\q \mbox {as } T\to\i.
$$
Consequently, letting $T\to\infty$ in \eqref{Delta-Y-Tana} and  \eqref{Delta-Y-in}, we obtain the desired results.
\end{proof}

By Proposition \ref{Pro-A.3}, we obtain the following stability result of \eqref{A.2-tau} with respect to terminal times.

\begin{corollary}
\label{Cor-terminal-time-stability}\sl
Assume that {\bf(A1)}--{\bf(A3)} hold. Let
$\tau_1,\tau_2\in\mathcal S$ satisfy
$ 
\tau_1\les \tau_2, $ $  \mathbb P\text{-a.s.}
$ 
For $i=1,2$, let $(Y^i,Z^i)$ be the solution of \eqref{A.2-tau} on
$\llbracket 0,\tau_i\rrbracket$ with the same generator $f$ and terminal
value
$ 
\xi_i\in L^\infty_{\mathcal F_{\tau_i}}(\mathbb R).
$ 
Then there exists a probability measure $\mathbb Q^{1,2}$, locally equivalent to
$\mathbb P$, such that, for all $s\ges 0$,
$$
|Y^1_{s\wedge\tau_1}-Y^2_{s\wedge\tau_1}|
\les
\mathbb E_{s\wedge\tau_1}^{\mathbb Q^{1,2}}
\left[
\Gamma_{s\wedge\tau_1,\tau_1}
|\xi_1-Y^2_{\tau_1}|\mathbf 1_{\{\tau_1<\infty\}}
\right].
$$
Moreover, there exists a constant $C=C(\rho_0)>0$, independent of
$\tau_1,\tau_2$ and the terminal values, such that
$$
\mathbb E_{s\wedge\tau_1}^{\mathbb Q^{1,2}}
\left[
\int_{s\wedge\tau_1}^{\tau_1}
\Gamma_{s\wedge\tau_1,r}  |Z_r^1-Z_r^2|^2\,\md r
\right]
\les
C
\mathbb E_{s\wedge\tau_1}^{\mathbb Q^{1,2}}
\left[
\Gamma_{s\wedge\tau_1,\tau_1} 
|\xi_1-Y_{\tau_1}^2|^2\mathbf 1_{\{\tau_1<\infty\}}
\right].
$$
\end{corollary}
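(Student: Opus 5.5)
The plan is to reduce the statement to Proposition \ref{Pro-A.3} by regarding $(Y^2,Z^2)$, stopped at $\tau_1$, as a solution of \eqref{A.2-tau} on $\llbracket 0,\tau_1\rrbracket$. Since $\tau_1\les\tau_2$, restricting the equation for $(Y^2,Z^2)$ to times $s\wedge\tau_1\les T\wedge\tau_1$ gives, for $0\les s\les T<\infty$,
$$
Y^2_{s\wedge\tau_1}=Y^2_{T\wedge\tau_1}+\int_{s\wedge\tau_1}^{T\wedge\tau_1}\big(f(r,\rho(r)Y^2_r,Z^2_r)-\rho(r)Y^2_r\big)\,\md r-\int_{s\wedge\tau_1}^{T\wedge\tau_1}Z^2_r\,\md B_r .
$$
This follows by applying the identity on $\llbracket 0,\tau_2\rrbracket$ with the pair of times $(s\wedge\tau_1, T\wedge\tau_1)$. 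These are stopping times rather than deterministic times, so I will either write the BSDE as an integral identity between any two stopping times bounded by $T\wedge\tau_2$, or simply replace the integrands by $\mathbf 1_{\{r\les\tau_1\}}(\cdots)$. The terminal value is $\tilde\xi_2:=Y^2_{\tau_1}$ on $\{\tau_1<\infty\}$. This is $\mathcal F_{\tau_1}$-measurable, because $Y^2$ is progressively measurable, and it is bounded, because $Y^2\in L^\infty_{\mathbb F}(0,\tau_2;\mathbb R)$. Hence $(Y^2,Z^2)$ restricted to $\llbracket0,\tau_1\rrbracket$ lies in $L^\infty_{\mathbb F}(0,\tau_1;\mathbb R)\times\cH^2_{\mathrm{loc}}(0,\tau_1;\mathbb R^d)$ and solves \eqref{A.2-tau} with data $(f,\tilde\xi_2)$ and terminal time $\tau_1$.

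One point needs care: on $\{\tau_1=\infty\}$ we also have $\tau_2=\infty$, and the convention $\Gamma\xi=0$ there is consistent. Proposition \ref{Pro-A.3} only uses the terminal value through $\mathbf 1_{\{\tau_1<\infty\}}$ and the boundedness of the $Y$'s, so nothing beyond boundedness is needed on that set.

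Next I apply Proposition \ref{Pro-A.3} with $\tau=\tau_1$, $f_1=f_2=f$, $\xi_1$ and $\tilde\xi_2=Y^2_{\tau_1}$. Because the generators coincide, $\delta f\equiv 0$. The proposition then yields a probability measure $\mathbb Q^{1,2}$, locally equivalent to $\mathbb P$, built from the Girsanov kernel $\gamma$ associated with $Z^1-Z^2$ up to $\tau_1$. With this measure the two estimates read
$$
|Y^1_{s\wedge\tau_1}-Y^2_{s\wedge\tau_1}|\les \mathbb E^{\mathbb Q^{1,2}}_{s\wedge\tau_1}\big[\Gamma_{s\wedge\tau_1,\tau_1}|\xi_1-Y^2_{\tau_1}|\mathbf 1_{\{\tau_1<\infty\}}\big],
$$
together with the corresponding $Z$-estimate with constant $C=C(\rho_0)$, which is independent of $\tau_1,\tau_2$ and of the terminal values. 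These are exactly the two claims.

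The main obstacle is the first step: making rigorous that the stopped pair solves the equation on the shorter random interval in the precise form \eqref{A.2-tau}, which is written with deterministic $s,T$. I would handle this by stopping both sides of the $\tau_2$-equation at $\tau_1$. The stochastic integral satisfies $\int_0^{t\wedge\tau_1}Z^2\,\md B=\big(\int_0^{\cdot\wedge\tau_2}Z^2\,\md B\big)_{t\wedge\tau_1}$, and the Lebesgue integral behaves similarly; subtracting the values at $t=s$ and $t=T$ then gives the identity. Everything else is a direct invocation of Proposition \ref{Pro-A.3}.
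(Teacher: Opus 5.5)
Your proposal is correct and follows the paper's own argument. You restrict $(Y^2,Z^2)$ to $\llbracket 0,\tau_1\rrbracket$, where it solves \eqref{A.2-tau} with terminal value $Y^2_{\tau_1}$ on $\{\tau_1<\infty\}$, and then apply Proposition~\ref{Pro-A.3} with $f_1=f_2=f$ (so $\delta f\equiv 0$). Stopping the $\tau_2$-equation at $\tau_1$ just spells out what the paper calls the flow property; note that the pointwise bound on $Y^2_{\tau_1}$ comes from the path continuity of $Y^2$, not only from its essential bound in $L^\infty_{\mathbb F}$.
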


\begin{proof}
By the flow property of BSDEs on random intervals, we have,  for all $0\les s\les T<\infty$,
$$
Y_{s\wedge\tau_1}^2
=
Y_{T\wedge\tau_1}^2
+
\int_{s\wedge\tau_1}^{T\wedge\tau_1}
\left(
f(r,\rho(r)Y_r^2,Z_r^2)-\rho(r)Y_r^2
\right)\md r
-
\int_{s\wedge\tau_1}^{T\wedge\tau_1}
Z_r^2\,\md B_r,
$$
with terminal value $Y_{\tau_1}^2$ at $\tau_1$ on $\{\t_1<\i\}$.
Therefore, on the common random interval
$\llbracket 0,\tau_1\rrbracket$, the two BSDEs for
$(Y^1,Z^1)$ and $(Y^2,Z^2)$ have the same generator and differ only in
their terminal values, namely $\xi_1$ and $Y_{\tau_1}^2$. Applying
Proposition \ref{Pro-A.3} to these two equations, we obtain the desired result. 
\end{proof}

Next, we study the well-posedness of BSDE \eqref{A.2-tau}.

 \begin{lemma}\label{LemmaA-1}\sl
Under {\rm {\bf(A1)}--{\bf(A3)}}, for any $\xi\in L^{\i}_{\cF_\t}(\mathbb R)$, there exists a unique pair
$
(Y ,Z )\in L^\infty_{\mathbb{F}}(0,\t;\mathbb{R})\times \cH^2_{\mathrm{loc}}(0,\t;\mathbb{R}^d)
$
solving the BSDE \eqref{A.2-tau}. Moreover,
\bel{Z-weighted-estimate-tau}
\operatorname*{ess\,sup}\limits_{(s,\omega)\in \llbracket 0,\tau\rrbracket }|Y_s(\omega)|\les M_1  \mbox { and }  \ \mathbb{E}\int_0^\tau \Gamma_{0,r}   |Z _r|^2\,\md r\les M_2 ,
\ee
  where
  $   \ds M_1:= \lVert\xi\rVert_\i+\lVert\beta_1\rVert_{L^1}+\frac{\beta_2}{\rho_0} , $ $  M_2:= 4M_1\( \lVert\beta_1\rVert_{L^1}
+\frac{\beta_2}{\rho_0}\)
+2M_1^2\(1 
+\frac{2L_z^2}{\rho_0}\).
$

\end{lemma}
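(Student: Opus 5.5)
Uniqueness is immediate from Proposition \ref{Pro-A.3}: two bounded solutions with the same data $(f,\xi)$ have $\delta f\equiv0$ and $\xi_1=\xi_2$, so the first estimate gives $|Y^1_{s\wedge\tau}-Y^2_{s\wedge\tau}|=0$ for all $s$. The second estimate then gives $\mathbb E^{\mathbb Q}\int\Gamma|\Delta Z|^2=0$, hence $Z^1=Z^2$ $\md r\otimes\md\mathbb P$-a.e. on $\llbracket0,\tau\rrbracket$, because $\mathbb Q$ is locally equivalent to $\mathbb P$. For existence I will use truncation and approximation in the spirit of Darling--Pardoux and Briand--Hu. For $n\in\mathbb N$, let $(Y^n,Z^n)$ solve the finite-horizon BSDE on $[0,n]$ with generator $\mathbf 1_{\{r\les\tau\}}\big(f(r,\rho(r)y,z)-\rho(r)y\big)$ and terminal value $\xi^n:=\xi\mathbf 1_{\{\tau\les n\}}$ at time $n$. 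This is a standard Lipschitz BSDE: the coefficients are Lipschitz with constants $L_y\|\rho\|_{L^\infty(0,n)}+\|\rho\|_{L^\infty(0,n)}$ and $L_z$, and $f(\cdot,0,0)$ is bounded on $[0,n]$. Since $\xi$ is $\mathcal F_\tau$-measurable, $Y^n_s=\xi$ for $s\in[\tau,n]$ on $\{\tau\les n\}$, and I extend $Y^n_s:=0$, $Z^n_s:=0$ for $s>n$. Thus $(Y^n,Z^n)$ solves \eqref{A.2-tau} with terminal time $\tau\wedge n$ and terminal value $\xi^n$, where $\xi^n=0$ on $\{\tau>n\}$.

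The uniform bound $M_1$ comes from the Tanaka argument of Proposition \ref{Pro-A.3}, comparing with the trivial solution. Write $f(r,\rho Y,Z)=f(r,\rho Y,Z)-f(r,0,Z)+f(r,0,Z)-f(r,0,0)+f(r,0,0)$. Monotonicity gives $\operatorname{sgn}(Y)\big(f(r,\rho Y,Z)-f(r,0,Z)\big)\les0$, since $\rho>0$. The $z$-difference is linearized by a Girsanov drift $\gamma$ with $|\gamma|\les L_z$. Tanaka's formula for $\Gamma_{s,r}|Y_r|$ under $\mathbb Q$ then yields
$$|Y^n_{s}|\les \mathbb E^{\mathbb Q}_s\Big[\Gamma_{s,\tau\wedge n}|\xi^n|+\int_s^{\tau\wedge n}\Gamma_{s,r}(\beta_1(r)+\beta_2)\,\md r\Big]\les \|\xi\|_\infty+\|\beta_1\|_{L^1}+\frac{\beta_2}{\rho_0}=M_1,$$
using $\Gamma\les1$ and $\int_s^\infty e^{-\rho_0(r-s)}\md r=1/\rho_0$.

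The Cauchy property will follow from Corollary \ref{Cor-terminal-time-stability} with $\tau_1=\tau\wedge n\les\tau_2=\tau\wedge m$, $m>n$. This gives
$$|Y^n_{s\wedge\tau}-Y^m_{s\wedge\tau}|\les \mathbb E^{\mathbb Q}_{s\wedge\tau}\big[\Gamma_{s,\tau\wedge n}|\xi^n-Y^m_{\tau\wedge n}|\big].$$
On $\{\tau\les n\}$ both terminal values equal $\xi$, so the integrand vanishes. On $\{\tau>n\}$ the integrand is bounded by $\Gamma_{s,n}\cdot 2M_1\les2M_1e^{-\rho_0(n-s)}$. Hence $\sup_{s\les S}|Y^n_{s\wedge\tau}-Y^m_{s\wedge\tau}|\to0$ uniformly, for every $S$. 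The companion $Z$-estimate gives $Z^n\to Z$ in $\mathcal H^2$ on each $[0,S\wedge\tau]$; the conditional expectation under $\mathbb Q$ must be converted to $\mathbb P$ on finite intervals, which is possible because the density has bounded exponential moments there. Passing to the limit in the equation on $[s\wedge\tau,T\wedge\tau]$ uses the Lipschitz property of $f$ and continuity of $\rho$. The terminal condition $Y_\tau=\xi$ on $\{\tau<\infty\}$ holds because $Y^n_\tau=\xi$ once $n\ges\tau$, and the bound $|Y|\les M_1$ passes to the limit.

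The bound $M_2$ is the main bookkeeping step. I would apply Itô's formula to $\Gamma_{0,r}|Y_r|^2$ on $[0,T\wedge\tau]$ directly under $\mathbb P$, with no change of measure. This gives
$$\mathbb E\int_0^{T\wedge\tau}\Gamma_{0,r}|Z_r|^2\md r\les \mathbb E\big[\Gamma_{0,T\wedge\tau}|Y_{T\wedge\tau}|^2\big]+2\mathbb E\int_0^{T\wedge\tau}\Gamma_{0,r}|Y_r|\big(|f(r,0,0)|+L_z|Z_r|\big)\md r,$$
after dropping the monotone $y$-part and the nonnegative term $\rho\Gamma|Y|^2$. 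The terminal term is bounded by $M_1^2$. The $f(r,0,0)$ term is bounded by $2M_1(\|\beta_1\|_{L^1}+\beta_2/\rho_0)$. For the $Z$ term I use Young's inequality, $2L_zM_1|Z|\les \frac12|Z|^2+2L_z^2M_1^2$, and bound the remaining weighted integral by $\int\Gamma_{0,r}\md r\les1/\rho_0$. Absorbing $\frac12|Z|^2$ into the left side and doubling gives exactly $M_2=4M_1(\|\beta_1\|_{L^1}+\beta_2/\rho_0)+2M_1^2(1+2L_z^2/\rho_0)$. Finally, let $T\to\infty$ by monotone convergence.
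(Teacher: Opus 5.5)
Your architecture is the paper's: truncation at $\tau\wedge n$ with terminal value $\xi\mathbf 1_{\{\tau\les n\}}$, the Girsanov--Tanaka bound $M_1$, an $e^{-\rho_0(n-S)}$ Cauchy estimate for $Y^n$, and passage to the limit. Your It\^o computation for $\Gamma_{0,r}|Y_r|^2$ under $\mathbb P$ reproduces $M_2$ exactly, and uniqueness comes from Proposition \ref{Pro-A.3}. The paper re-linearizes in $(y,z)$ to get the Cauchy estimate; you take it from Corollary \ref{Cor-terminal-time-stability} instead. That is legitimate: the corollary is a pure a priori estimate, it applies to the bounded truncated solutions, and there is no circularity. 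It is the same computation, done once.

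The step whose justification does not hold as written is the convergence of $Z^n$. The corollary makes $\mathbb E^{\mathbb Q^{n,m}}\big[\int_0^{\tau\wedge n}\Gamma_{0,r}|Z^n_r-Z^m_r|^2\,\md r\big]$ small. You then claim this transfers to $\mathbb P$ on $[0,S]$ because the density has bounded exponential moments. That alone is not enough. Set $X:=\int_0^{S\wedge\tau}|Z^n_r-Z^m_r|^2\,\md r$ and $\Lambda:=\md\mathbb P/\md\mathbb Q^{n,m}$ on $\mathcal F_S$. H\"older only gives $\mathbb E[X]\les\|\Lambda\|_{L^p(\mathbb Q^{n,m})}\|X\|_{L^{p'}(\mathbb Q^{n,m})}$ with $p'>1$, so you also need a uniform bound on a higher moment of $X$, which you have not provided. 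Smallness of $\mathbb E^{\mathbb Q^{n,m}}[X]$ by itself only yields convergence in $\mathbb P$-probability.

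There are two ways to close this:
\begin{itemize}
\item Work with convergence in probability. Pass to the limit in the equation in probability, and get $\bar Z\in\cH^2_{\mathrm{loc}}$ by Fatou from the uniform bound $\mathbb E\int_0^{\tau\wedge n}\Gamma_{0,r}|Z^n_r|^2\md r\les M_2$.
\item Follow the paper, which is shorter. Apply It\^o's formula to $|Y^m-Y^n|^2$ on $\llbracket 0,T\wedge\tau\rrbracket$ directly under $\mathbb P$. Drop the $y$-difference by monotonicity, absorb the $L_z$-term by Young's inequality, and insert $|Y^m_r-Y^n_r|\les M_1e^{-\rho_0(n-r)}$. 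This gives $\mathbb E\int_0^{T\wedge\tau}|Z^m_r-Z^n_r|^2\,\md r\les(2+2L_z^2/\rho_0)M_1^2e^{-2\rho_0(n-T)}$ with no change of measure, by the kind of computation you already use for $M_2$.
\end{itemize}

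A smaller slip: {\bf(A2)} with $\beta_1$ merely in $L^1$ does not make $f(\cdot,0,0)$ bounded, or even square-integrable, on $[0,n]$. What you actually have is $\int_0^n|f(r,0,0)|\,\md r\les\|\beta_1\|_{L^1}+n\beta_2$. So the truncated BSDE should be solved by an $L^p$-type well-posedness result for generators integrable in time, not by the classical $L^2$ theory. The paper's citation glosses over the same point.
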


\begin{proof}
 The proof follows the argument of    \cite[Theorem 2.1]{Royer-2004}, with suitable modifications to account for the time-dependent discount factor and the non-uniform bound on the generator $f$.
 Although the main argument is similar, we include a brief sketch here, since several properties of the truncated BSDEs established below will be used in the following section.

   For $n\in \mathbb{N}$, define  the stopping times
$
\tau_n:=\tau\wedge  n .$
  Consider the following truncated BSDE,
\bel{T-BSDE-tau}
\widetilde Y_s^n
= \xi_n+
\int_s^{\t_n}\(f(r, \rho(r)\widetilde Y_r^n,\widetilde Z_r^n)-\rho(r)\widetilde Y_r^n\)\,\md r
-\int_s^{\t_n} \widetilde Z_r^n\,\md B_r,
\quad s\in\llbracket 0,\tau_n\rrbracket ,
\ee
where $
\xi_n:=\xi\,\mathbf{1}_{\{\tau\les n\}} \in L^\i_{\mathcal F_{\tau_n}}(\dbR).
$
 For each $n\in\mathbb N$, since the stopping time $\tau_n $ is bounded, the truncated BSDE \eqref{T-BSDE-tau} admits a unique adapted solution $(\widetilde Y^n,\widetilde Z^n)$, see   \cite[Theorem 3.4]{Darling-Pardoux-1997}.
 For each $n\in\mathbb N$, define
$$
Y_s^n:=\widetilde Y_{s\wedge\tau_n}^n,\qquad
Z_s^n:=\widetilde Z_s^n\,\mathbf 1_{\llbracket 0,\tau_n\rrbracket }(s),
\qquad s\ges 0.
$$

\noindent
\emph{Step 1.} We show that $ (Y^n)$ is uniformly bounded in $n$.
By linearizing the generator in $z$ and applying a Girsanov transform as in Proposition  \ref{Pro-A.3},  one has  an equivalent
probability measure $\mathbb P^{n}$
and a Brownian motion
$
(B_s^n )_{s\in\llbracket 0,\tau_n\rrbracket }
$ under  $\mathbb P^n$ such that, \eqref{T-BSDE-tau}  can be rewritten as
$$
\widetilde Y_s^n
=\xi_n +
\int_s^{\t_n} \(f(r,  \rho(r)\widetilde Y_r^n,0)-\rho(r)\widetilde Y_r^n\)\,\md r
-\int_s^{\t_n}\widetilde  Z_r^n\,\md B_r^n,\q s\in\llbracket 0,\tau_n\rrbracket .
$$
Then, applying Tanaka's formula to
$\Gamma_{s,r}|\widetilde Y_r^n|$ on $\llbracket s,\tau_n\rrbracket$ under $\mathbb P^n$, we get
$$
\begin{aligned}
|\widetilde Y_s^n|
&=  \mathbb E_s^{\mathbb P^n}\[
\Gamma_{s,\tau_n}|\xi_n |
+
\int_s^{\tau_n}
\Gamma_{s,r}\operatorname{sgn}(\widetilde Y_r^n)
f(r,\rho(r)\widetilde Y_r^n,0)\,\md r  \\
&\qquad\qquad\quad
-
\int_s^{\tau_n}
\Gamma_{s,r}\operatorname{sgn}(\widetilde Y_r^n)
\widetilde Z_r^n\,\md B_r^n
-
\int_s^{\tau_n}\Gamma_{s,r}\,\md  L_r^0(\widetilde Y^n)\] \\
%
%
&\les  \mathbb E_s^{\mathbb P^n}
\[
\Gamma_{s,\tau_n}| \xi_n |
+
\int_s^{\tau_n}
\Gamma_{s,r}\,
\operatorname{sgn}(\widetilde Y_r^n)
|f(r,0,0)|\,\md r
\] \\
&\les
\lVert\xi\rVert_\infty
+
\mathbb E_s^{\mathbb P^n}
\[
\int_s^{\tau_n}
\Gamma_{s,r}\bigl(\beta_1(r)+\beta_2\bigr)\,\md r
\]  \\
&\les
\lVert\xi\rVert_\infty+\lVert\b_1\rVert _{L^1}
+\frac{\beta_2}{\rho_0}:=M_1,
\q  s\in \llbracket 0,\tau_n\rrbracket ,
\q \dbP^n\mbox{-a.s.} 
\end{aligned}
$$
where $L^0(\widetilde Y^n)$ denotes the local time of $\widetilde Y^n$ at
zero and $\operatorname{sgn}(0)=0$, and  we have used {\bf (A1)}   and the
nonnegativity of the local time term. Since $\mathbb P^n$ and $\mathbb P$ are equivalent on $\mathcal F_{\tau_n}$,
the estimate also holds $\mathbb P$-a.s.
 
 By definition,  for all $s\in \llbracket  \tau_n,\t\rrbracket $, $|Y_s^n|=|\widetilde Y_{\t_n}^n|=|\xi_n |\les M_1$, $\dbP$-a.s.
Therefore,
 together with the estimate on $\llbracket 0,\tau_n\rrbracket $, we conclude that
$ \ds
\operatorname*{ess\,sup}_{(s,\omega)\in\llbracket0,\tau\rrbracket}
|Y_s^n(\omega)|\les M_1,$ $ n\ges1 .
$ 
    \ms

 \no\emph{Step 2.} We prove, for every fixed $S\ges 0$, $(Y^n)_{n\ges1}$ converges uniformly on $\llbracket0,S\wedge\tau\rrbracket$.
Let $m>n$  and define
$
\widehat Y_s^{n,m}:=  Y_s^m-  Y_s^n,
$ $
\widehat Z_s^{n,m}:=   Z_s^m-  Z_s^n ,
$   for $s\in\llbracket 0,\tau\rrbracket $.
Still
by the techniques of    linearizing  in $z$ and the   Girsanov transform,   one can introduce an  equivalent    probability measure $\dbP^{n,m}$ and a $\dbP^{n,m}$-Brownian motion $B^{n,m}$ such that
$$
\widehat Y_s^{n,m}
=
\widehat Y_{\t_n}^{n,m}+\int_s^{\t_n} (\g_r^{n,m}-1)\rho(r)\widehat Y_r^{n,m}\,\md r
-\int_s^{\t_n} \widehat Z_r^{n,m}\,\md B_r^{n,m},\q s\in \llbracket 0,\tau_n\rrbracket ,
$$
where
$$
\g_r^{n,m}
:=
\left\{
\begin{aligned}
&
\frac{
f(r  ,\rho(r)  Y_r^{m}, Z_r^m)-f(r  ,\rho(r) Y_r^n,  Z_r^m)
}{
\rho(r)( Y_r^{m}-  Y_r^n)
},
\qquad  Y_r^{m}\neq   Y_r^n,
\\
& 0,\hskip 6.85cm   Y_r^{m}=   Y_r^n,
\end{aligned}
\right.
$$
and $\gamma_r^{n,m}\les 0$,  $|\g_r^{n,m}|\les L_y$ for all $r\in \llbracket 0,\tau_n\rrbracket $, $\dbP$-a.s.

Hence, by the representation formula for linear BSDEs,
$$\ba{ll}
\ns\ds 
|\widehat Y_s^{n,m}|
=\Big|
\mathbb E_s^{\mathbb P^{n,m}}
\[
\exp\(
\int_s^{\tau_n}
(\gamma_r^{n,m}-1)\rho(r)\,\md r
\)
\widehat Y_{\tau_n}^{n,m}
\]\Big|\les\mathbb E_s^{\mathbb P^{n,m}}
\[
\Gamma_{s,\tau_n}
|\widehat Y_{\tau_n}^{n,m}|
\] ,
\qquad s\in\llbracket 0,\tau_n\rrbracket .
\ea $$
Observe that
$\ds
\widehat Y_{\tau_n}^{n,m}
=
\widetilde Y_{\tau_n}^m-\xi_n
=
\mathbf 1_{\{\tau>n\}}\widetilde Y_n^m,
$ 
and  $|\widetilde Y_n^m|\les M_1$. Thus
$$
|\widehat Y_s^{n,m}|
\les
\mathbb E_s^{\dbP^{n,m}}\left[\Gamma_{s,\t_n}\mathbf 1_{\{\tau>n\}}| \widetilde Y_{n}^m|\right] 
\les
 M_1 e^{-\rho_0(n-s)},\q s\in \llbracket 0,\tau_n\rrbracket ,\  \dbP^{n,m}\mbox{-a.s.}
$$

Now fix $S\ges 0$ and assume that $n\ges S$. Then
$
\llbracket 0,S\wedge\tau\rrbracket\subset \llbracket0,\tau_n\rrbracket.
$
Hence
\bel{App-widehat Y-tau}
\sup_{s\in\llbracket 0,S\wedge\tau\rrbracket}|Y_s^m-Y_s^n|
\les
M_1 e^{-\rho_0(n-S)}
\to 0,
\qquad n\to\infty,\quad \dbP\text{-a.s.}
\ee
Thus, for every fixed $S\ges 0$, $(Y^n)_{n\ges1}$ is a Cauchy sequence uniformly on $\llbracket0,S\wedge\tau\rrbracket$, $\dbP$-a.s.  Consequently, there exists a continuous adapted process $\bar Y=(\bar Y_s)_{s\in\llbracket 0,\tau\rrbracket }$ such that, for every fixed $S\ges 0$,
\begin{equation}\label{Y^T-Y-2}
\sup_{s\in\llbracket 0,S\wedge\tau\rrbracket}|  Y_s^n-\bar Y_s|
\les
 M_1  e^{-\rho_0(n-S)}
\to 0,
\qquad n\to\infty,\quad \dbP\text{-a.s.}
\end{equation}

\noindent
\emph{Step 3.} We next study the convergence of $(Z^n)_{n\ges 1}$.
Fix $T>0$. let $m$, $n $ be integers such that $m> n\ges T$. Then, for $s\in[0,T]$,
\bel{Yhat-on-fixed-T}
\begin{aligned}
\widehat{Y}_{s\wedge\tau}^{n,m}
&=
\widehat{Y}_{T\wedge\tau}^{n,m}
+
\int_{s\wedge\tau}^{T\wedge\tau}
\Big(
f(r ,\rho(r)Y_r^m,Z_r^m)-f(r ,\rho(r)Y_r^n,Z_r^n)-\rho(r)\widehat{Y}_r^{n,m}
\Big)\,\md r
 \\
&\quad
-
\int_{s\wedge\tau}^{T\wedge\tau}\widehat{Z}_r^{n,m}\,\md B_r,\qq s\in [0,T].
\end{aligned}\ee

Applying It\^o's formula to $|\widehat Y_r^{n,m}|^2$ on $\llbracket0,T\wedge\tau\rrbracket$ and using the monotonicity condition in $y$ together
with the Lipschitz continuity in $z$, we obtain
$$
\mathbb{E}\int_0^{T\wedge\tau}|\widehat{Z}_r^{n,m}|^2\,\md r
\les
2\mathbb{E}|\widehat{Y}_{T\wedge\tau}^{n,m}|^2
+
4L_z^2\mathbb{E}\int_0^{T\wedge\tau}|\widehat{Y}_r^{n,m}|^2\,\md r\les
\(2+\frac{2L_z^2}{\rho_0}\)M_1^2 e^{-2\rho_0(n-T)},
$$
where \eqref{App-widehat Y-tau} is used.
Therefore, $(Z^n)_{n\ges 1}$ is a Cauchy sequence in $L_{\dbF}^2(0,T\wedge \t;\mathbb R^d)$ for every $T>0$.
Consequently, there exists a progressively measurable process $\bar Z\in \cH^2_{\mathrm{loc}}(0,\tau;\mathbb R^d)$  such that
$$
\lim_{n\to\infty}\mathbb E\int_0^{T\wedge \t} |Z_r^n-\bar Z_r|^2\,\md r=0,
\qquad \forall T>0.
$$

\noindent
\emph{Step 4.}  We now pass to the limit and identify the equation satisfied by $(\bar Y,\bar Z)$.
Fix $T>0$. For any integer  $n$ such that $n\ges T$, the truncated BSDE yields
$$
Y_{s\wedge \t}^n
=
Y_{T\wedge \t}^n
+\int_{s\wedge \t}^{T\wedge \t} \big(f(r  ,\rho(r)Y_r^n,Z_r^n)-\rho(r)Y_r^n\big)\,\md r
-\int_{s\wedge \t}^{T\wedge \t} Z_r^n\,\md B_r,
\qquad s\in[0,T].
$$
By    the  Lipschitz continuity
of $f$ in $(y,z)$,  and the dominated convergence theorem, letting $n\to\infty$, Steps 2 and 3 yield
$$
\bar Y_{s\wedge \t}
=
\bar Y_{T\wedge \t}
+\int_{s\wedge \t}^{T\wedge \t} \big(f(r  ,\rho(r)\bar Y_r,\bar Z_r)-\rho(r)\bar Y_r\big)\,\md r
-\int_{s\wedge \t}^{T\wedge \t} \bar Z_r\,\md B_r,
\qquad s\in[0,T].
$$
 Moreover, $|\bar Y_s|\les M_1$, $s\in[0,\t]$, $\dbP$-a.s., and
$ \ds
\mathbb E\int_0^{T\wedge\t} |\bar Z_r|^2\,\md r<\infty,$ $\forall\,T>0.
$

Furthermore, for every $n$ such that $n\ges T$, on the set $\{\tau\les T\}$ one has
$\tau_n=\tau$ and hence
$
Y_\tau^n=\xi_n =\xi.
$
Letting $n\to\infty$  and using the uniform convergence in Step 2, we deduce that
$$
\bar Y_\tau=\xi,
\qquad \text{on }\{\tau\les T\}.
$$
Since $T>0$ is arbitrary and $\ds
\{\tau<\infty\}=\bigcup_{N=1}^\infty \{\tau\les N\}
$, it follows that
$
\bar Y_\tau=\xi 
$ on $\{\tau<\infty\}.
$
Therefore,  $(\bar Y,\bar Z)$ solves the BSDE \eqref{A.2-tau}.

\ms
 
It remains to estimate $\bar Z$. Applying It\^o's formula to
$\Gamma_{0,s}|\bar Y_s|^2$ on   $\llbracket0,T\wedge\tau\rrbracket$, we get
$$
\begin{aligned}
&\mathbb E\left[
|\bar Y_0|^2
+
\int_0^{T\wedge\tau}\Gamma_{0,s}|\bar Z_s|^2\,ds
+
\int_0^{T\wedge\tau}\rho(s)\Gamma_{0,s}|\bar Y_s|^2\,ds
\right]
\\
&=
\mathbb E\left[
\Gamma_{0,T\wedge\tau}|\bar Y_{T\wedge\tau}|^2
+
2\int_0^{T\wedge\tau}
\Gamma_{0,s}\bar Y_s f(s,\rho(s)\bar Y_s,\bar Z_s)\,ds
\right]\\
&\les \mathbb E\[
\Gamma_{0,{T\wedge\tau}}M_1^2
 +
 \int_0^{T\wedge\tau} 2M_1\Gamma_{0,s} \big(\beta_1(s)+\beta_2 +L_z|\bar Z_s|\big) \,\md s\]\\
&\les \mathbb E\[
\Gamma_{0,{T\wedge\tau}} M_1^2
 +
 \int_0^{T\wedge\tau} 2M_1\Gamma_{0,s} \big(\beta_1(s)+\beta_2 \big) \,\md s+
\int_0^{T\wedge\tau}\Gamma_{0,s}\(\frac 12|\bar Z_s|^2+ 2M_1  ^2 L_z^2 \) \,\md s\].
\end{aligned}
$$
Therefore,
$$
\begin{aligned}
 & \mathbb E\int_0^{T\wedge\tau} \Gamma_{0,s} |\bar Z_s|^2\,\md s
\les\mathbb E\[
2\Gamma_{0,{T\wedge\tau}} M_1^2+  \int_0^{T\wedge\tau} \Gamma_{0,s} \(4M_1 (\beta_1(s)+\beta_2  )+4M_1^2 L_z^2   \)\,\md s \] \\
 &\les
2 M_1^2+  4M_1\lVert\beta_1\rVert_{L^1}
+\frac{4M_1\beta_2}{\rho_0} 
+\frac{4L_z^2}{\rho_0}M_1^2:= M_2,
\end{aligned}
$$
where we have used
$$
\int_0^{T\wedge\tau}  \Gamma_{0,s} \beta_1(s)\,\md s\les \lVert\beta_1\rVert_{L^1},
\
\int_0^{T\wedge\tau} \Gamma_{0,s} \,\md s\les \frac{1}{ \rho_0},\
\int_0^{T\wedge\tau}  \Gamma_{0,s} \rho(s)\,\md s
\les 1.
$$ 
 Letting $T\to\infty$, the monotone convergence theorem yields
\eqref{Z-weighted-estimate-tau}.

Finally, by applying Proposition \ref{Pro-A.3} to two solutions of BSDE \eqref{A.2-tau} with the same data, we immediately obtain the  uniqueness.
\end{proof}

Next, we apply Proposition  \ref{Pro-A.3} and Lemma \ref{LemmaA-1}  to establish the well-posedness and stability of a class of decoupled FBSDEs with random terminal times.
  In detail,
for any $(t,x)\in [0,\infty)\times \mathbb{R}^n$, $\t\in\cS_t$, consider the forward SDE
\begin{equation}\label{A.1-tau}
\left\{\2n
\begin{aligned}
&\md X_s^{t,x}  = b(s,X_s^{t,x})\,\md s+\sigma(s,X_s^{t,x})\,\md B_s,\qquad s\ges t,\\
&X_t = x.
\end{aligned}
\right.
\end{equation}
and  the BSDE
\begin{equation}
\label{A.2-tau-g}
\left\{\2n
\begin{aligned}
&Y_{s\wedge\tau}^{t,x}
 =
Y_{T\wedge\tau}^{t,x}
+
\int_{s\wedge\tau}^{T\wedge\tau}
\Big(g(r,X_r^{t,x},\rho(r)Y_r^{t,x},Z_r^{t,x})-\rho(r)Y_r^{t,x}\Big)\,\md r
\\
&\qq\q
-
\int_{s\wedge\tau}^{T\wedge\tau} Z_r^{t,x}\,\md B_r,
\quad t\les s\les T<\infty,
\\
&Y_{\tau} ^{t,x}=\psi(X_\tau^{t,x}) ,\quad \text{on }\{\tau<\infty\}.
\end{aligned}
\right.
\end{equation}
%
We assume that the coefficients $b:\O\times [0,\i)\times\dbR^n\to\dbR^n$, $\si:\O\times [0,\i)\times\dbR^n\to\dbR^{n\times d},
 $ $g:\O\times [0,\i)\times\dbR^n\times \dbR\times\dbR^d\to\dbR$, and $\psi:\O\times \dbR^n\to\dbR$ satisfy the following conditions.
\begin{itemize}
\item[\bf(A4)]  For each fixed $x\in\mathbb R^n$,  $b(\cdot,x)$ and $\sigma(\cdot,x)$ are $\dbF$-progressively measurable. $
b
$ and $
\sigma
$
are uniformly bounded, continuous in $t\in [0,\i)$, and uniformly Lipschitz continuous in $x\in\dbR^n$.

\item[\bf(A5)] For each fixed $(x,y,z)$,  $g(\cdot,x,y,z)$ is $\dbF$-progressively measurable.
 $
g
$
is continuous in $t$, and there exist constants $L_x,L_y,L_z\ges 0$ such that, for all $t\ges 0$, $x_1,x_2\in \mathbb{R}^n$, $y_1,y_2\in \mathbb{R}$, and $z_1,z_2\in \mathbb{R}^d$,
$$
|g(t,x_1,y_1,z_1)-g(t,x_2,y_2,z_2)|
\les
L_x|x_1-x_2|+L_y|y_1-y_2|+L_z|z_1-z_2|,\ \dbP\mbox{-a.s.}
$$
Moreover, for all $t\ges 0$, $x\in \mathbb{R}^n$, $y_1,y_2\in \mathbb{R}$, and $z\in \mathbb{R}^d$,
$$
\bigl(g(t,x,y_1,z)-g(t,x,y_2,z)\bigr)(y_1-y_2)\les 0,\ \dbP\mbox{-a.s.}
$$

\item[\bf(A6)] There exist a measurable function $\beta_1:[0,\infty)\to[0,\infty)$ with $\beta_1\in L^1(0,\infty)$, and a constant $\beta_2\ges 0$ such that
$
|g(t,x,0 ,0)|\les \beta_1(t)+\beta_2,
$
for all $(t,x)\in [0,\infty)\times \mathbb{R}^n$, $\dbP$-a.s.

  \item  [\bf(A7)] For each  $x$,  $\psi(x)$ is $\cF_\t$-measurable. Moreover,  there exist constants $B_\psi\ges 0$ and $L_\psi\ges0$    such that, for all  $x,x_1,x_2\in\mathbb R^n,$
$
|\psi(x)|\les B_\psi$, and $
|\psi(x_1)-\psi(x_2)|\les L_\psi|x_1-x_2|,
$ $\dbP$-a.s.

\item[{\bf(A8)}]  There exists a constant $\mu >-\rho_0$ such that, for all $t\ges 0$ and $x,\bar x\in\mathbb R^n$,
$$
 2\langle x-\bar x,\; b(t,x)-b(t,\bar x)\rangle
+  |\sigma(t,x)-\sigma(t,\bar x) |^2+ 2L_z|\si(t,x)-\si(t,\bar x)||x-\bar x|
\les -\mu |x-\bar x|^2, \ \dbP\mbox{-a.s.}
$$

\end{itemize}

 Under {\bf(A4)}, SDE   \eqref{A.1-tau}  admits a unique strong solution. Consequently, we obtain the following well-posedness result for BSDE \eqref{A.2-tau-g}, which follows immediately from  Lemma \ref{LemmaA-1}  applied to
$
\xi=\psi(X_\tau^{t,x})$ and $ f(r,y,z)=g(r,X_r^{t,x},y,z).
$

 \begin{lemma}\label{LemmaA-1-FBSDE}\sl
Under {\rm {\bf(A3)}--{\bf(A7)}}, for any $(t,x)\in [0,\infty)\times \mathbb{R}^n$  and any $\tau\in\mathcal S_t$, there exists a unique pair
$
(Y^{t,x},Z^{t,x})\in L^\infty_{\mathbb{F}}(t,\t;\mathbb{R})\times \cH^2_{\mathrm{loc}}(t,\t;\mathbb{R}^d)
$
solving \eqref{A.2-tau-g}. Moreover, for all $s\ges t$,
$$
|Y_s^{t,x}|
 \les M_3,
\ \dbP\mbox{-a.s., \  and }
 \q
\mathbb{E}\int_t^\tau \Gamma_{t,s}  |Z^{t,x}_s|^2\,\md s\les M_4 ,
$$
   where
$
 M_3:=B_\psi+\lVert\beta_1\rVert_{L^1}+\frac{\beta_2}{\rho_0} , $ $  M_4:= 4M_3\big(\lVert\beta_1\rVert_{L^1}+\frac{\beta_2}{\rho_0}\big)
+
2M_3^2\big(1+\frac{2L_z^2}{\rho_0}\big).
$
\end{lemma}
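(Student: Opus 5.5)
The plan is to reduce Lemma \ref{LemmaA-1-FBSDE} to Lemma \ref{LemmaA-1}, as the text preceding the statement suggests. Two things must be adjusted: the initial time is $t$ rather than $0$, and the data are $\xi:=\psi(X^{t,x}_\tau)$ and $f(r,y,z):=g(r,X^{t,x}_r,y,z)$ for $r\ges t$.

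\emph{Step 1: the data satisfy the hypotheses.} Under {\bf(A4)}, SDE \eqref{A.1-tau} has a unique continuous adapted strong solution $X^{t,x}$. Then:
\begin{itemize}
\item[(i)] $f(\cdot,y,z)$ is $\dbF$-progressively measurable, as the composition of the progressively measurable $g(\cdot,\cdot,y,z)$ with the progressive process $X^{t,x}$. Continuity of $r\mapsto f(r,y,z)$ follows from the continuity of $g$ in $t$, its Lipschitz continuity in $x$, and the continuity of the paths of $X^{t,x}$.
\item[(ii)] The Lipschitz bounds in $(y,z)$ and the monotonicity in $y$ of {\bf(A1)} are inherited pointwise from {\bf(A5)}.
\item[(iii)] {\bf(A6)} gives $|f(r,0,0)|\les\beta_1(r)+\beta_2$, which is {\bf(A2)}.
\item[(iv)] By {\bf(A7)}, $\xi$ is $\cF_\tau$-measurable with $\|\xi\|_\infty\les B_\psi$. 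The point to check is the measurability of $\omega\mapsto\psi(\omega,X_\tau(\omega))$. Since $\psi$ is continuous in $x$ and $\cF_\tau$-measurable for each fixed $x$, it is jointly $\cF_\tau\otimes\mathcal B(\dbR^n)$-measurable (Carathéodory). Because $X_\tau\mathbf 1_{\{\tau<\infty\}}$ is $\cF_\tau$-measurable, $\xi$ is $\cF_\tau$-measurable on $\{\tau<\infty\}$.
\end{itemize}
{\bf(A3)} is assumed directly.

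\emph{Step 2: shift the initial time from $0$ to $t$.} Lemma \ref{LemmaA-1} is stated on $\llbracket0,\tau\rrbracket$ with $\tau\in\cS$. I would extend the problem to $[0,t]$ by setting $f(r,y,z):=0$ for $r<t$. This keeps {\bf(A1)}--{\bf(A2)} except the continuity in time at $r=t$; if that matters, extend instead by $f(r,y,z):=g(t,x,y,z)$ for $r<t$. Since $\tau\in\cS_t\subset\cS$, Lemma \ref{LemmaA-1} then yields a solution on $\llbracket0,\tau\rrbracket$, and its restriction to $\llbracket t,\tau\rrbracket$ solves \eqref{A.2-tau-g}.

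A cleaner alternative is to rerun the proof of Lemma \ref{LemmaA-1} verbatim on $\llbracket t,\tau\rrbracket$ with $\tau_n:=\tau\wedge(t+n)$, replacing $\Gamma_{0,\cdot}$ by $\Gamma_{t,\cdot}$. Every estimate there uses only $\rho\ges\rho_0$ and $\int_t^\infty\beta_1\les\|\beta_1\|_{L^1}$. This gives existence, and uniqueness follows from Proposition \ref{Pro-A.3} applied on $\llbracket t,\tau\rrbracket$.

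\emph{Step 3: bounds.} The bound $|Y_s^{t,x}|\les M_3$ is the Step 1 estimate of Lemma \ref{LemmaA-1} with $\|\xi\|_\infty$ replaced by $B_\psi$, so $M_1$ becomes $M_3$. The $Z$-estimate comes from applying Itô's formula to $\Gamma_{t,s}|Y_s|^2$ on $\llbracket t,T\wedge\tau\rrbracket$, exactly as at the end of that proof, and then letting $T\to\infty$; this reproduces $M_2$ with $M_1$ replaced by $M_3$, i.e.\ $M_4$. Neither {\bf(A8)} nor the Lipschitz continuity in $x$ is needed here. They matter only for continuity of the solution in $x$ later.

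The only genuinely non-mechanical point is the measurability in Step 1(iv), together with the time-shift bookkeeping of Step 2. Rerunning the argument on $\llbracket t,\tau\rrbracket$ avoids both the continuity issue at $r=t$ and any artificial extension, so that is the route I would take.
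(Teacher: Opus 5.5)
Your proposal is correct and follows the paper's route. The paper's entire justification is that the lemma ``follows immediately from Lemma~\ref{LemmaA-1} applied to $\xi=\psi(X^{t,x}_\tau)$ and $f(r,y,z)=g(r,X^{t,x}_r,y,z)$,'' which leaves the hypothesis check and the shift of the initial time from $0$ to $t$ implicit; you supply both, and your rerun of the argument on $\llbracket t,\tau\rrbracket$ with weight $\Gamma_{t,\cdot}$ gives $M_3$ and $M_4$ as stated (an extension to $[0,t]$ alone would give the $Z$-bound only with weight $\Gamma_{0,\cdot}$, i.e.\ off by the factor $\Gamma_{0,t}^{-1}$).
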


 Furthermore, by  Proposition \ref{Pro-A.3}, we obtain the Lipschitz continuity of $Y_t^{t,x}$ with respect to the initial state $x\in\mathbb R^n$ as follows.

 \begin{lemma}\label{LemmaA-2}\sl
Under {\rm {\bf(A3)}--{\bf(A8)}}, for any   $t\ges 0$,
  for all $x,\bar x\in \mathbb{R}^n$,
\bel{Y-Lip}
|Y_t^{t,x}-Y_t^{t,\bar x}|
\les 
\Big( 
L_\psi^2+\frac{L_x^2}{ \rho_0(\rho_0+\mu)}
\Big)^{1/2}
|x-\bar x|, \qquad \mathbb P\text{-a.s.} 
\ee
In particular, if $\psi\equiv 0$, then
\begin{equation}
|Y_t^{t,x}-Y_t^{t,\bar x}|
\les
 \frac{L_x}{\sqrt{ \rho_0(\rho_0+\mu)}}|x-\bar x|,\qquad \mathbb P\text{-a.s.}
\label{Yt-Lipschitz-tau-Phi0}
\end{equation}
\end{lemma}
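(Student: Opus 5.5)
We apply the a priori estimate of Proposition \ref{Pro-A.3} in its quadratic form rather than its $L^1$ form. We then control the forward differences under the Girsanov measure $\mathbb Q$ produced there, using the dissipativity {\bf(A8)}.

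Fix $t\ges0$ and $x,\bar x\in\dbR^n$. Write $X:=X^{t,x}$, $\bar X:=X^{t,\bar x}$ and $\Delta X:=X-\bar X$. We view $(Y^{t,x},Z^{t,x})$ and $(Y^{t,\bar x},Z^{t,\bar x})$ as solutions of \eqref{A.2-tau} on $\llbracket t,\tau\rrbracket$ with data
\[
f_1(r,y,z)=g(r,X_r,y,z),\quad \xi_1=\psi(X_\tau),\qquad f_2(r,y,z)=g(r,\bar X_r,y,z),\quad \xi_2=\psi(\bar X_\tau).
\]
By {\bf(A5)}--{\bf(A6)}, both $f_1$ and $f_2$ satisfy {\bf(A1)}--{\bf(A2)} with the same constants, and we have $|\delta f_r|\les L_x|\Delta X_r|$ and $|\xi_1-\xi_2|\les L_\psi|\Delta X_\tau|$.

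\emph{Step 1: BSDE estimate.} Let $\gamma$ and $\mathbb Q$ be constructed as in the proof of Proposition \ref{Pro-A.3}, starting at time $t$; in particular $|\gamma_r|\les L_z$. We use the It\^o identity \eqref{Delta-Y-1f} at $s=t$. This time we bound $2\Delta Y_r\delta f_r\les \rho_0|\Delta Y_r|^2+\rho_0^{-1}|\delta f_r|^2$, drop $I^y$ by monotonicity, and absorb $\rho_0|\Delta Y_r|^2\les\rho(r)|\Delta Y_r|^2$. Passing to $T\to\infty$ exactly as at the end of that proof then gives
\[
|\Delta Y_t|^2\les \mathbb E_t^{\mathbb Q}\Big[L_\psi^2\,\Gamma_{t,\tau}|\Delta X_\tau|^2\mathbf 1_{\{\tau<\infty\}}+\frac{L_x^2}{\rho_0}\int_t^\tau\Gamma_{t,r}|\Delta X_r|^2\,\md r\Big].
\]

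\emph{Step 2: forward estimate under $\mathbb Q$.} Under $\mathbb Q$ on $\llbracket t,\tau\rrbracket$ we have $\md X=(b(r,X)+\sigma(r,X)\gamma_r)\,\md r+\sigma(r,X)\,\md B^{\mathbb Q}$, and the same holds for $\bar X$ with the \emph{same} $\gamma$. Hence the drift of $\Delta X$ is $\Delta b_r+\Delta\sigma_r\gamma_r$. We apply It\^o's formula to $\Gamma_{t,r}|\Delta X_r|^2$. The $\md r$ coefficient is
\[
\Gamma_{t,r}\big(-\rho(r)|\Delta X_r|^2+2\langle\Delta X_r,\Delta b_r\rangle+2\langle\Delta X_r,\Delta\sigma_r\gamma_r\rangle+|\Delta\sigma_r|^2\big).
\]
Using $|\gamma_r|\les L_z$ and {\bf(A8)}, this is bounded by $-(\rho(r)+\mu)\Gamma_{t,r}|\Delta X_r|^2\les-(\rho_0+\mu)\Gamma_{t,r}|\Delta X_r|^2\les0$. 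This is exactly why the term $2L_z|\Delta\sigma||\Delta x|$ appears in {\bf(A8)}. Localizing the stochastic integral with stopping times and letting the horizon tend to infinity (Fatou for the terminal term, monotone convergence for the integral), we obtain
\[
\mathbb E^{\mathbb Q}_t\big[\Gamma_{t,\tau}|\Delta X_\tau|^2\mathbf 1_{\{\tau<\infty\}}\big]+(\rho_0+\mu)\,\mathbb E^{\mathbb Q}_t\int_t^\tau\Gamma_{t,r}|\Delta X_r|^2\,\md r\les|x-\bar x|^2 .
\]
Since each of the two nonnegative terms is bounded by the right-hand side, they bound the first and second terms of Step 1 separately. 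Inserting these bounds into Step 1 gives \eqref{Y-Lip}, and taking $L_\psi=0$ gives \eqref{Yt-Lipschitz-tau-Phi0}.

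\emph{Main obstacle.} The delicate point is Step 2 on the possibly infinite interval. $\mathbb Q$ is only locally equivalent to $\mathbb P$, so all It\^o/optional-stopping arguments must be carried out on $\llbracket t,T\wedge\tau\rrbracket$, which lies in a region where $\mathbb Q^T$ is a genuine measure. The plan is to pass $T\to\infty$ only at the end, by Fatou on $\{\tau<\infty\}$ and monotone convergence for the integral. On $\{\tau=\infty\}$ the terminal contribution vanishes by the convention $\Gamma_{r,\tau}\xi=0$. Before passing to the limit we use $\Gamma_{t,T}|\Delta X_T|^2\ges0$ to discard the term on $\{\tau>T\}$.
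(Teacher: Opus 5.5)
Your proof is correct and follows the paper's argument: the quadratic estimate \eqref{Delta-Y-1f} under the Girsanov measure $\mathbb Q$ of Proposition \ref{Pro-A.3}, combined with an It\^o estimate for $\Delta X$ under the same $\mathbb Q$, in which the $2L_z|\Delta\sigma||\Delta x|$ term of {\bf(A8)} absorbs the extra drift $\Delta\sigma\,\gamma$. The only difference is cosmetic: the paper uses the weight $e^{\mu(r-t)}$, obtains $\mathbb E^{\mathbb Q}_t[e^{\mu(r-t)}|\Delta X_r|^2\mathbf 1_{\{r\les\tau\}}]\les|x-\bar x|^2$ and then integrates $e^{-(\rho_0+\mu)(r-t)}$, whereas your weight $\Gamma_{t,r}$ yields the terminal and running bounds in a single inequality.
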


\begin{proof}

 For simplicity, we write
$
\Delta X_s:=X_s^{t,x}-X_s^{t,\bar x},$ $
\Delta Y_s:=Y_s^{t,x}-Y_s^{t,\bar x},$ $
\Delta Z_s:=Z_s^{t,x}-Z_s^{t,\bar x},$ $ s\ges t.
$
 By considering $f_1(s,\rho(s)y,z):=f(s,X_s^{t,x}, \rho(s)y,z)$, $f_2(s,\rho(s)y,z):=f(s,X_s^{t,\bar x}, \rho(s)y,z)$
 in  the proof of  Proposition \ref{Pro-A.3}, from estimate \eqref{Delta-Y-1f}, we  obtain
\bel{BSDE-Gro}\ba{ll}
\ns\ds |\Delta Y_t|^2 +\dbE^{\mathbb Q}_t\Big[\int_t^{T\wedge\tau} \G_{t,r} |\Delta Z_r|^2\md r\Big]
 \les \mathbb E^{\mathbb Q}_t\Big[
 \Gamma_{t,T\wedge\tau}|\Delta Y_{T\wedge\tau}|^2
+\int_t^{T\wedge\tau}  \frac1{\rho_0}   \Gamma_{t,r} |I_r^x| ^2  \,\md r
\Big] \\
\ns\ds  \les \mathbb E^{\mathbb Q}_t\Big[
 \Gamma_{t,T\wedge\tau}|\Delta Y_{T\wedge\tau}|^2
+ \frac{L_x^2}{\rho_0} \int_t^{T\wedge\tau}   \Gamma_{t,r} |\D X_r| ^2  \,\md r
\Big] ,
\ea\ee
where
$ \ds
I_r^x:=g(r,X_r^{t,x},\rho(r)Y_r^{t,x},Z_r^{t,x})-g(r,X_r^{t,\bar x},\rho(r)Y_r^{t,x},Z_r^{t,x}).
$ Here, $\mathbb Q $ is the probability measure introduced in the proof of Proposition \ref{Pro-A.3}.

We now estimate $\Delta X$ under $\mathbb Q$. Under $\mathbb Q$, the process $\Delta X$ satisfies
$$\ba{ll}
\ns\ds
\md\Delta X_s
=
\big(
b(s,X_s^{t,x})-b(s,X_s^{t, \bar x})
+(\sigma(s,X_s^{t,x})-\sigma(s,X_s^{t, \bar x}))\gamma_s
\big)\,\md s\\
\ns\ds \qq\qq
+(\sigma(s,X_s^{t,x})-\sigma(s,X_s^{t, \bar x}))\,\md B_s^{\mathbb Q},\q s\in [t,T].
\ea$$
Applying It\^o's formula to $e^{ \mu(r-t)}|\Delta X_r|^2$ on $\llbracket t,s\wedge\tau\rrbracket$, we obtain
$$
\begin{aligned}
&\mathbb E_t^{\mathbb Q} \[e^{ \mu(s\wedge\tau-t)}|\Delta X_{s\wedge\tau}|^2\] = |x-\bar x|^2
+\mathbb E_t^{\mathbb Q} \[\int_t^{s\wedge\tau} e^{ \mu(r-t)}
\Bigl(2\langle \Delta X_r, b(r,X_r^{t,x})-b(r,X_r^{t,\bar x})\rangle
 \\
&\qquad\quad+2\langle \Delta X_r, (\sigma(r,X_r^{t,x})-\sigma(r,X_r^{t,\bar x}))\gamma_r\rangle
+|\sigma(r,X_r^{t,x})-\sigma(r,X_r^{t,\bar x})|^2+
 \mu|\Delta X_r|^2
\Bigr)\,\md r\]\\
&\les |x-\bar x|^2,
\end{aligned}$$
where we have used $|\gamma_r|\les L_z$ and  condition  {\bf(A8)}.
Consequently,
\bel{Delta X}
\mathbb E_t^{\mathbb Q} \left[e^{ \mu(\t-t)}|\Delta X_\t |^2\mathbf 1_{ \{\t\les T\}} \right]
\les  |x-\bar x|^2,
\qquad s\in [t,T  ].
\ee

By  {\bf(A7)}, we have
\bel{terminal-term-estimate}
\begin{aligned}
\dbE_t^{\mathbb Q} \left[ \Gamma_{t,T\wedge\tau}|\Delta Y_{T\wedge\tau}|^2\right]
&=
\dbE_t^{\mathbb Q} \left[
\Gamma_{t,T\wedge\tau} 
\Bigl(
\mathbf 1_{\{\tau\les T\}}(\psi(X_\tau^{t,x})-\psi(X_\tau^{t,\bar x}))
+\mathbf 1_{\{\tau>T\}}\Delta Y_T
\Bigr)^2
\right] \\
&\les 
L_\psi^2\,\dbE_t^{\mathbb Q} \left[\Gamma_{t,\tau} |\Delta X_\tau|^2\mathbf 1_{\{\tau\les T\}}\right]
+ \dbE_t^{\mathbb Q}\left[\Gamma_{t,T} |\Delta Y_T|^2\mathbf 1_{\{\tau>T\}}\right]\\
&\les  
L_\psi^2\,\dbE_t^{\mathbb Q} \left[e^{ \mu(\tau-t)}|\Delta X_\tau|^2\mathbf 1_{\{\tau\les T\}} \right]
+4|M_3|^2\cd  \dbE_t^{\mathbb Q}\left[\Gamma_{t,T} \mathbf 1_{\{\tau>T\}}\right],
\end{aligned}
\ee
where we have used $\mu>-\rho_0$, and the boundedness of $Y^{t,x}$ and $Y^{t,\bar x}$ established in Lemma \ref{LemmaA-1-FBSDE}.
Thus
$$
\limsup_{T\to\infty}\dbE_t^{\mathbb Q} \left[ \Gamma_{t,T\wedge\tau}|\Delta Y_{T\wedge\tau}|^2\right]
\les  L_\psi^2|x-\bar x|^2.
$$

Substituting \eqref{Delta X} and \eqref{terminal-term-estimate} into \eqref{BSDE-Gro}, we obtain
$$
|\Delta Y_t|^2
 \les
 \( 
L_\psi^2
+\frac{L_x^2}{\rho_0}\int_t^T e^{- \rho_0(r-t)}e^{- \mu(r-t)}\,\md  r
\)|x-\bar x|^2.
$$
Letting $T\to\infty$, we get
$
|\Delta Y_t|^2
\les
 \big( 
L_\psi^2+\frac{L_x^2}{ \rho_0(\rho_0+\mu)}
\big)|x-\bar x|^2.
$
 Then, \eqref{Y-Lip} is proved. When   $\psi\equiv 0$,
\eqref{Yt-Lipschitz-tau-Phi0} is obvious from \eqref{Y-Lip}.
\end{proof}

Furthermore, we have the following result for the infinite horizon BSDE.

\br\label{ReA-3} \sl
If $\tau\equiv\infty$, then the terminal condition
$Y_\tau^{t,x}=\psi(X_\tau^{t,x})$ on $\{\tau<\infty\}$ is void, and  \eqref{A.2-tau-g}   reduces to the following infinite horizon BSDE
 \bel{BSDE-A}\ba{ll}
 \ns\ds
Y_s^{t,x}
=
Y_T^{t,x}+\int_s^T \bigl(g(r,X_r^{t,x},\rho(r)Y_r^{t,x},Z_r^{t,x})-\rho(r)Y_r^{t,x}\bigr)\,\md r\\
\ns\ds \qq\q-\int_s^T Z_r^{t,x}\,\md B_r,
\quad \mbox{ for all }\, t\les s\les T<\infty.
\ea\ee
Therefore, by Lemma  \ref{LemmaA-1-FBSDE},   under {\rm \bf(A3)--(A6)}, it follows that \eqref{BSDE-A} admits a unique bounded adapted solution $(Y^{t,x},Z^{t,x})$. Moreover,
$
|Y_s^{t,x}|\les M_1^\infty,$ $ \forall s\ges t,$ $ \dbP$-a.s.,
and $ \ds \dbE\int_t^\infty \Gamma_{t,s} |Z_s^{t,x}|^2\,\md s\les M_2^\infty,
$
where
\bel{App-M-12-i}
M_1^\infty:=
 \lVert\beta_1\rVert_{L^1(0,\infty)}+\frac{\beta_2}{\rho_0} , \q M_2^\infty:= 4M_1^\infty\( \lVert\beta_1\rVert_{L^1}
+\frac{\beta_2}{\rho_0}\)
+2(M_1^\i)^2\(1
+\frac{2L_z^2}{\rho_0}\).\ee

Furthermore, if ${\bf (A8)}$ holds, then by    the special case $\psi\equiv0$  in  Lemma \ref{LemmaA-2}, for any  fixed $t\in [0,\infty)$ and all $x,\bar x\in \mathbb{R}^n$,
\begin{equation*}
|Y_t^{t,x}-Y_t^{t,\bar x}|
\les
 \frac{L_x}{\sqrt{ \rho_0(\rho_0+\mu)}}|x-\bar x|.
\label{Yt-Lipschitz}
\end{equation*}

 \er

We conclude this part with the following remarks. The infinite horizon BSDE in
Remark \ref{ReA-3} may be viewed as an uncontrolled counterpart of the
framework in \cite{Buckdahn-Li-Zhao-2021,Li-Zhao-2019}. In contrast to these
works, we allow a time-dependent discount factor $\rho(\cd)$ and replace the
uniform boundedness of the generator at the origin by
$$
|g(t,x,0,0)|\les \beta_1(t)+\beta_2,\qquad (t,x)\in [0,\infty)\times\mathbb R^n,
$$
with $\beta_1\in L^1(0,\infty)$ and $\beta_2\ges0$.
The Lipschitz condition of $g$ with respect to $y$ could be further relaxed by
 the approximation argument of \cite[Lemma 2.1]{Buckdahn-Li-Zhao-2021}. Since our main focus is the following
stochastic differential game problem,  we do not pursue this extension  here.

\section{Infinite Horizon Non-Autonomous Stochastic Recursive Differential Game}\label{sec:Non-autonomous SDG-MT}

Now we  study an infinite horizon non-autonomous stochastic recursive differential game.
Following the standard framework in differential games; see, for example,
\cite{Buckdahn-Li-2008}, we  first present the following definitions of admissible
controls and nonanticipative strategies on the  infinite horizon.
Throughout this section, $U$ and $V$ are assumed to be nonempty compact
metric spaces.

\begin{definition}\label{admissible controls-infinite horizon}\sl
Let $t\ges0$. Fix $u_0\in U$.
A process
$
u=\{u_s,\ s\ges t\}
$
is called an admissible control for Player $1$ on $[t,\infty)$, if its
extension
$ 
\bar u_s
:=
u_s\mathbf 1_{\{s\ges t\}}
+
u_0\mathbf 1_{\{0\les s<t\}},
$ $ s\ges0,
$ 
is $\mathbb F$-progressively measurable and takes values in $U$.
The set of all admissible controls for Player $1$ on $[t,\infty)$ is denoted
by $\mathcal U_{t,\infty}$.
We identify two controls $u_1$ and $u_2$ in $\mathcal U_{t,\infty}$ and write
$
u_1\equiv u_2
$
on $[t,\infty)$, if
$ 
u_1(s,\omega)=u_2(s,\omega),
$ $
\md s\otimes\md\mathbb P\text{-a.e. on }[t,\infty)\times\Omega .
$ 

Similarly, the admissible controls for Player $2$ on $[t,\infty)$ are defined
by replacing $U$, $u_0$, and $\mathcal U_{t,\infty}$ above with $V$, $v_0$,
and $\mathcal V_{t,\infty}$, respectively.
\end{definition} 
 
\begin{definition}\label{nonanticipative-strategies-infinite horizon}\sl
Let $t\ges0$. A nonanticipative strategy for Player $1$ on $[t,\infty)$ is a
mapping
 $
\alpha:\mathcal V_{t,\infty}\to \mathcal U_{t,\infty}
$ 
such that, for any $S\in\mathcal S_t$ and any
$v_1,v_2\in\mathcal V_{t,\infty}$, if
$ 
v_1\equiv v_2
$ on $\llbracket t,S\rrbracket,
$ 
then
$ 
\alpha(v_1)\equiv \alpha(v_2)
$ on $ \llbracket t,S\rrbracket.
$ 
The set of all nonanticipative strategies for Player $1$ on $[t,\infty)$ is
denoted by $\mathcal A_{t,\infty}$.

Similarly, a nonanticipative strategy for Player $2$ on $[t,\infty)$ is a
mapping
$ 
\beta:\mathcal U_{t,\infty}\to \mathcal V_{t,\infty},
 $
defined analogously. The set of all such nonanticipative strategies is denoted
by $\mathcal B_{t,\infty}$.
\end{definition}

For any  $(t,x)\in[0,\i)\times\dbR^n $  and admissible controls $u(\cd)\in \cU_{t,\i},$ $v(\cd)\in \cV_{t,\i}$, consider the controlled state equation
    \bel{state-NA}
\begin{cases}
\md X_s^{t,x;u,v} = b(s,X_s^{t,x;u,v}, u_s, v_s)\md s + \sigma(s,X_s^{t,x;u,v}, u_s, v_s)\md B_s, \q  s\ges  t, \\
X_t^{t,x;u,v} = x,
\end{cases}
\ee
and  the following infinite horizon BSDE
\begin{equation}\label{BSDE-NA}
  \ba{ll}
 \ns\ds Y_{s}^{t,x;u,v}
 =  Y_{T}^{t,x;u,v}
 + \int_s^T   \bigl(g (r, X_r^{t,x;u,v}, \rho(r)Y_{r}^{t,x;u,v}, Z_r^{t,x;u,v}  , u_r,v_r)-\rho(r)Y_{r}^{t,x;u,v}\bigr)  \md r\\
 \ns\ds
\qq\qq\q -\int_s^T  Z_r^{t,x;u,v}  \md  B_r, \q  \mbox{for all }t \les s \les T <\i.
    \ea
      \end{equation}
Note that the above forward-backward system is non-autonomous, in the sense that the coefficients $b$, $\sigma$, $g$, and the discount factor $\rho$ are allowed to depend explicitly on the time variable.
  The discount factor $\rho:[0,\infty)\to [0,\infty)$  is still the one in  {\bf(A3)}.
  We impose the   assumptions on the  coefficients
$
b:[0,\infty)\times \mathbb{R}^n\times U\times V \to \mathbb{R}^n
$, $
\sigma:[0,\infty)\times \mathbb{R}^n\times U\times V \to \mathbb{R}^{n\times d}$ and $
g:[0,\infty)\times \mathbb{R}^n\times \mathbb{R}\times \mathbb{R}^d\times U\times V \to \mathbb{R}
$ as follows.

\begin{description}
 \item[{\bf(H1)}] The coefficients
$
b
$ and $
\sigma
$
are uniformly bounded  on $[0,\infty)\times \mathbb{R}^n\times U\times V$,  and continuous in $(s,u,v)$, and uniformly Lipschitz continuous in $x$.

\item[{\bf(H2)}] The function
$
g
$
is continuous in $(s,u,v)$, and  there exist constants $L_x,L_y,L_z\ges 0$ such that, for all $s\ges 0$, $x_1,x_2\in\mathbb{R}^n$, $y_1,y_2\in\mathbb{R}$, $z_1,z_2\in\mathbb{R}^d$, $u\in U$, and $v\in V$,
$$
|g(s,x_1,y_1,z_1,u,v)-g(s,x_2,y_2,z_2,u,v)|
\les
L_x|x_1-x_2|+L_y|y_1-y_2|+L_z|z_1-z_2|.
$$
Moreover, for all $s\ges 0$, $x\in\mathbb{R}^n$, $y_1,y_2\in\mathbb{R}$, $z\in\mathbb{R}^d$, $u\in U$, and $v\in V$,
$$
\bigl(g(s,x,y_1,z,u,v)-g(s,x,y_2,z,u,v)\bigr)(y_1-y_2)\les 0.
$$

\item[{\bf(H3)}] There exist a measurable function $\beta_1:[0,\infty)\to[0,\infty)$ with
$
\beta_1\in L^1(0,\infty;\dbR^+),
$
and a constant $\beta_2\ges 0$ such that, for all $(s,x,u,v)\in [0,\infty)\times \mathbb{R}^n \times U\times V$,
$
|g(s,x,0,0,u,v)|\les \beta_1(s)+\beta_2.
$
\end{description}

Under {\bf(H1)}, for any $(t,x)\in [0,\infty)\times \mathbb R^n$ and any admissible control pair
$(u,v)\in \mathcal U_{t,\infty}\times \mathcal V_{t,\infty}$, the controlled state equation
\eqref{state-NA}  admits a unique strong solution, denoted by $X^{t,x;u,v}$. Furthermore, by Remark \ref{ReA-3}, if {\bf(A3)} and {\bf(H2)}--{\bf(H3)} also hold,
  then the backward equation \eqref{BSDE-NA}
admits a unique solution
$
(Y^{t,x;u,v},Z^{t,x;u,v})\in
L^\infty_{\mathbb F}(t,\infty;\mathbb R)\times \cH^2_{\mathrm{loc}}(t,\infty;\mathbb R^d),
$
which satisfies
\begin{equation}\label{4.5}
|Y_s^{t,x;u,v}|
\les M_1^\i, \ s\ges t,\ \dbP\mbox{-a.s.,  and } \ \
\mathbb E \int_t^\infty \Gamma_{t,s}\, |Z_s^{t,x;u,v}|^2\,\md s
\les M_2^\i.
\end{equation}
Here  $M_1^\i$  and $M_2^\i$ are defined as in \eqref{App-M-12-i}; in particular, $M_1^\i$ and $M_2^\i$ are independent of $(t,x)$ and $(u,v)$.

 If, in addition, the following condition holds:
\begin{description}
 \item[{\bf(H4)}] There exists a constant $\mu>-\rho_0$ such that, for all $s\ges 0$, $x,x'\in \mathbb R^n$,
$u\in U$, and $v\in V$,
$$\ba{ll}
\ns\ds
2\langle x-x',  b(s,x,u,v)-b(s,x',u,v)\rangle
+  |\sigma(s,x,u,v)-\sigma(s,x',u,v)|^2\\
\ns\ds
+2L_z |\sigma(s,x,u,v)-\sigma(s,x',u,v)| \cd|x-x'|
\les -\mu |x-x'|^2,
\ea$$

\end{description}
then  Remark \ref{ReA-3}  yields that, for any  $t\ges 0$, $x,x'\in  \mathbb R^n$
and any admissible control pair $(u,v)\in \mathcal U_{t,\infty}\times \mathcal V_{t,\infty}$,
\begin{equation}\label{Y-lip}
|Y_t^{t,x;u,v}-Y_t^{t,x';u,v}|\les
\frac{L_x}{\sqrt{\rho_0(\rho_0+\mu)}}|x-x'|, \quad \mathbb P\text{-a.s.}
\end{equation}

 With the above preparation, we can  rigorously formulate the infinite horizon non-autonomous stochastic recursive differential game associated with \eqref{state-NA} and \eqref{BSDE-NA}. And its lower and upper value functions are well defined, respectively, by
\bel{NA-uW}\underline{W}(t,x)=  \essinf_{\b \in \cB_{t,\i} } \esssup_{u \in \cU_{t,\i}  }   Y_{t}^{t,x;u,\b(u)} ,\q (t,x)\in [0,\i)\times\dbR^n,  \ee
and
\bel{NA-oW}\overline{W}(t,x)= \esssup_{\a \in \cA_{t,\i}   }   \essinf_{v \in \cV_{t,\i} }  Y_{t}^{t,x;\a(v),v}  ,\q (t,x)\in [0,\i)\times\dbR^n. \ee
 In what follows, we shall focus only on the lower value function $\underline{W}$; all  the corresponding results for the upper value function $\overline{W}$ can be established in a completely analogous manner.

Based on \eqref{4.5} and \eqref{Y-lip}, by the definitions of essential supremum and infimum, it follows that,
for all $t\in [0,\infty)$, $x,x'\in \mathbb R^n$,
\bel{W-bound}
|\underline{W}(t,x)|\les M_1^\i,  \q
|\underline{W}(t,x)-\underline{W}(t,x')|\les
\frac{L_x}{\sqrt{\rho_0(\rho_0+\mu)}}|x-x'| ,\q\dbP\mbox{-a.s.}
\ee

  \begin{remark}  \sl

  The estimates in \eqref{W-bound} are now obtained in the
$\mathbb P$-a.s. sense. Once the determinacy of $\underline W$ is established
in Theorem~\ref{Th-NA}, we identify $\underline W$ with its deterministic
version, and then \eqref{W-bound} holds pointwise.

  \end{remark}

Next, consider the HJBI equation
\begin{equation}\label{HJBI-NA-L}
\left\{\2n\ba{ll}
\ns\ds \partial_t   W(t,x)-\rho(t)  W(t,x)
+\underline H\bigl(t,x,(  W,D W,D^2 W)(t,x)\bigr)=0,
\qquad (t,x)\in [0,\infty)\times\mathbb R^n,\\
\ns\ds \lim_{T\to\infty}\Gamma_{0,T}  W(T,x)=0,
\qquad \text{uniformly in }x\in\mathbb R^n.
\ea\right.\end{equation}
where,  for 
$(t,x,y,p,A)\in [0,\infty)\times\mathbb R^n\times\mathbb R
\times\mathbb R^n\times\mathbb S^n$,     
$$
\underline H(t,x,y,p,A)
:=
\sup_{u\in U}\inf_{v\in V}
\Big\{
\frac12\operatorname{tr}
\big(\sigma\sigma^\top(t,x,u,v)A\big)
+b(t,x,u,v)\cdot p
+g\big(t,x,\rho(t)y,p\sigma(t,x,u,v),u,v\big)
\Big\}.
$$  
If, in addition, $\beta_2=0$, then the above boundary condition can be
strengthened to
\begin{equation}\label{BC-2}
\lim_{T\to\infty}  W(T,x)=0,
\qquad \text{uniformly in }x\in\mathbb R^n.
\end{equation}
 We shall identify the lower value function as a viscosity solution of the
associated HJBI equation.  
Throughout this section, viscosity solutions of   equation
\eqref{HJBI-NA-L} are understood in the standard parabolic sense, e.g., \cite{Crandall-Ishii-Lions-1992, Fleming-Soner-2006, Buckdahn-Li-2008}.

\begin{definition}\label{Def-viscosity-solution-NA}\sl
 A function $w\in C([0,\infty)\times\mathbb R^n)$ is called  
\begin{itemize}
\item[(i)]  a
viscosity subsolution of \eqref{HJBI-NA-L} if, for every
$(t_0,x_0)\in[0,\infty)\times\mathbb R^n$ and every
$\varphi\in C^{3}_{l,b}([0,\infty)\times\mathbb R^n)$ such that
$w-\varphi$ attains a local maximum at $(t_0,x_0)$ on
$[0,\infty)\times\mathbb R^n$, one has
$$
\partial_t\varphi(t_0,x_0)
-\rho(t_0)w(t_0,x_0)
+\underline H\bigl(t_0,x_0,w(t_0,x_0),
D\varphi(t_0,x_0),D^2\varphi(t_0,x_0)\bigr)
\ges 0,
$$
and, moreover,
$ \ds
\limsup_{T\to\infty}\sup_{x\in\mathbb R^n}
\Gamma_{0,T}w(T,x)\les 0 .
$ 

\item[(ii)]   a
viscosity supersolution of \eqref{HJBI-NA-L} if, for every
$(t_0,x_0)\in[0,\infty)\times\mathbb R^n$ and every
$\varphi\in C^{3}_{l,b}([0,\infty)\times\mathbb R^n)$ such that
$w-\varphi$ attains a local minimum at $(t_0,x_0)$ on
$[0,\infty)\times\mathbb R^n$, one has
$$
\partial_t\varphi(t_0,x_0)
-\rho(t_0)w(t_0,x_0)
+\underline H\bigl(t_0,x_0,w(t_0,x_0),
D\varphi(t_0,x_0),D^2\varphi(t_0,x_0)\bigr)
\les 0,
$$
and, moreover,
$ \ds
\liminf_{T\to\infty}\inf_{x\in\mathbb R^n}
\Gamma_{0,T}w(T,x)\ges 0 .
$ 

\item[(iii)] A function $w\in C_b([0,\infty)\times\mathbb R^n)$ is called a
viscosity solution of \eqref{HJBI-NA-L} if it is both a viscosity subsolution
and a viscosity supersolution. In this case, equivalently,
$ \ds
\lim_{T\to\infty}\Gamma_{0,T}w(T,x)=0,
$ uniformly in $x\in\mathbb R^n .
$ 
\end{itemize}

\end{definition}
 
If, in addition, $\beta_2=0$, then the boundary condition at infinity in
Definition \ref{Def-viscosity-solution-NA} may be replaced by the stronger
undiscounted boundary condition
$\ds 
\lim_{T\to\infty}w(T,x)=0,
$ uniformly in $x\in\mathbb R^n
$ 
for viscosity solutions, with the corresponding one-sided inequalities for
subsolutions and supersolutions.

\begin{theorem}\label{Th-NA}\sl
Under assumptions {\bf(A3)} and {\bf(H1)}--{\bf(H4)}, the lower value function
$\underline W$ is deterministic and belongs to
$C_b([0,\infty)\times\mathbb R^n)$. Moreover, $\underline W$ is the unique
viscosity solution of the HJBI equation \eqref{HJBI-NA-L}   in
$C_b([0,\infty)\times\mathbb R^n)$.
If, in addition, $\beta_2=0$, then $\underline W$ also satisfies the stronger
undiscounted boundary condition \eqref{BC-2}.
\end{theorem}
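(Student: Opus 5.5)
We follow the finite horizon approximation announced in the introduction. For each $T>0$ and $t\in[0,T]$ we introduce the truncated game on $[t,T]$: the same state equation \eqref{state-NA}, and the BSDE \eqref{BSDE-NA} on $[t,T]$ with terminal value $Y_T=0$. Equivalently, this is \eqref{A.2-tau-g} with $\tau\equiv T$ and $\psi\equiv0$. The truncated lower value function is
$$
\underline W^T(t,x):=\essinf_{\beta\in\mathcal B_{t,T}}\esssup_{u\in\mathcal U_{t,T}}Y^{T;t,x;u,\beta(u)}_t .
$$
Since the horizon is finite and the coefficients are bounded, Lipschitz and continuous in $(s,u,v)$, the Buckdahn--Li theory \cite{Buckdahn-Li-2008} applies. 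It gives that $\underline W^T$ is deterministic and continuous on $[0,T]\times\mathbb R^n$, satisfies a DPP, and is the unique viscosity solution of the HJBI equation in \eqref{HJBI-NA-L} on $[0,T)\times\mathbb R^n$ with $\underline W^T(T,\cdot)=0$. The time dependence of $\rho$ and $g$ only enters through the generator $(r,y,z)\mapsto g(r,x,\rho(r)y,z,u,v)-\rho(r)y$, which is still Lipschitz, so their results transfer verbatim. Lemma~\ref{LemmaA-1-FBSDE} and Lemma~\ref{LemmaA-2} with $\tau=T$, $\psi=0$ give the bounds $|\underline W^T|\les M_1^\infty$ and a Lipschitz constant in $x$ that are uniform in $T$.

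The key estimate is uniform convergence $\underline W^T\to\underline W$. For fixed $(t,x)$ and controls $(u,v)$, we compare $Y^{t,x;u,v}$ with the truncated solution by Corollary~\ref{Cor-terminal-time-stability}, taking $\tau_1=T$, $\tau_2=\infty$ and $\xi_1=0$. This yields
$$
|Y^{t,x;u,v}_t-Y^{T;t,x;u,v}_t|\les \mathbb E^{\mathbb Q}_t\big[\Gamma_{t,T}|Y^{t,x;u,v}_T|\big]\les M_1^\infty e^{-\rho_0(T-t)}.
$$
This bound is uniform in $(u,v)$ and $x$, so it passes through the essinf/esssup, and a strategy on $[t,\infty)$ restricts to one on $[t,T]$, and conversely by extending with $v_0$. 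We obtain $|\underline W(t,x)-\underline W^T(t,x)|\les M_1^\infty e^{-\rho_0(T-t)}$ a.s. Hence $\underline W$ is a.s. equal to the deterministic limit of $\underline W^T$, the convergence is locally uniform in $t$ (uniform on $[0,S]\times\mathbb R^n$), and $\underline W\in C_b$. The stability theorem for viscosity solutions under locally uniform convergence then shows that $\underline W$ satisfies the PDE part of \eqref{HJBI-NA-L} in the viscosity sense.

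The boundary condition at infinity comes from \eqref{4.5}: $\Gamma_{0,T}|\underline W(T,x)|\les e^{-\rho_0T}M_1^\infty\to0$. If $\beta_2=0$, we apply the Tanaka estimate of Lemma~\ref{LemmaA-1} Step~1 from the initial time $T$, which gives $|Y^{T,x;u,v}_T|\les\int_T^\infty\beta_1(r)\,\md r\to0$ uniformly. This yields \eqref{BC-2}.

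Uniqueness is the main obstacle. It needs a comparison principle on $[0,\infty)\times\mathbb R^n$ for bounded sub/supersolutions $w_1,w_2$ that satisfy the discounted condition at infinity. We reduce it to the finite horizon. Set $\varepsilon_T:=\sup_x\big(\Gamma_{0,T}(w_1-w_2)(T,x)\big)^+\to0$. The functions $\tilde w_i(t,x):=\Gamma_{0,t}w_i(t,x)$ satisfy a transformed equation whose Hamiltonian is $\Gamma_{0,t}\underline H(t,x,\Gamma_{0,t}^{-1}y,\Gamma_{0,t}^{-1}p,\Gamma_{0,t}^{-1}A)$. This Hamiltonian is nonincreasing in $y$ by the monotonicity in (H2), since the $-\rho\tilde w$ term is absorbed. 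It is Lipschitz in $(x,p,A)$ locally uniformly on $[0,T]$ by (H1)--(H2), using the continuity of $\rho$. The standard parabolic comparison on $[0,T]\times\mathbb R^n$ for bounded functions \cite{Crandall-Ishii-Lions-1992}, with doubling of variables and a penalization $\delta|x|^2$ together with an $e^{Kt}$ weight to handle the unbounded domain and the merely nonincreasing (not strictly decreasing) dependence on $y$, gives $\tilde w_1-\tilde w_2\les\varepsilon_T$ on $[0,T]$. Letting $T\to\infty$ yields $w_1\les w_2$. The delicate points are the lack of strict monotonicity in $y$, which the time weight takes care of, and the $z$-dependence of $g$ through $p\sigma$, which the Lipschitz property in $z$ handles within the Ishii-lemma estimates.
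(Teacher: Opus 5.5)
Your proposal is correct and follows essentially the same route as the paper: truncation at $T$ with zero terminal value, the estimate $|\underline W-\underline W^T|\les M_1^\infty e^{-\rho_0(T-t)}$ passed through the essinf/esssup to get determinism, continuity and locally uniform convergence, a viscosity stability argument, and uniqueness via the transform $\Gamma_{0,t}w_i$ combined with finite horizon comparison against a constant-shifted supersolution. The differences are cosmetic: you invoke Corollary~\ref{Cor-terminal-time-stability} and the Tanaka bound of Lemma~\ref{LemmaA-1} directly, and you use $\varepsilon_T$ in place of the paper's $T_\varepsilon$ and $2\varepsilon$.
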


  \begin{proof}
  For any $T>t$, consider the finite horizon truncation of \eqref{BSDE-NA}  at  $T$ as follows,
 \begin{equation}\label{approximating BSDE-zero terimal condition}
  \ba{ll}
 \ns\ds Y_{s}^{T,t,x;u,v}
 =  \int_s^T \big(  g (r, X_r^{t,x;u,v},\rho(r) Y_{r}^{T,t,x;u,v}, Z_r^{T,t,x;u,v}  , u_r,v_r)-\rho(r) Y_{r}^{T,t,x;u,v}\big) \, \md r \\
  \ns\ds\qq\qq\qq -  \int_s^T  Z_r^{T,t,x;u,v}  \,\md  B_r,\q s\in [t,T].
    \ea
      \end{equation}
Under assumptions {\bf(H1)}--{\bf(H4)}, for each fixed $T>0$,  equation \eqref{approximating BSDE-zero terimal condition} admits a unique adapted solution. We then define the corresponding  lower value function by
\bel{WT-definition}W^T(t,x)=  \essinf_{\b \in \cB_{t,T} } \esssup_{u \in \cU_{t,T}   }   Y_{t}^{T,t,x;u,\b(u)}  ,\q (t,x)\in[0,T]\times \dbR^n. \ee
By the finite horizon result in \cite{Buckdahn-Li-2008}, $W^T$ is deterministic and is
the unique viscosity solution,  among continuous functions with polynomial
growth in the spatial variable,   of the following
HJBI equation,  
    \begin{equation}\label{approximating finite horizon HJBI equation}\3n\3n\left\{\3n
\ba{ll}
\ns\ds
\frac{\partial }{\partial t}W^T(t,x)  -\rho(t) W^T(t,x) +  \underline{H}\big(t,x,(W^T,DW^T,D^2W^T)(t,x)\big)  =0,\  (t,x)\in [0,T)\times \dbR^n,\\
\ns\ds W^T(T,x)  = 0,\q x\in\dbR^n.
  \ea  \right.  \end{equation}

Fix $S>0$. For any $T,T'>S$,  and $(t,x)\in[0,S]\times\mathbb R^n$,
similar to   \eqref{App-widehat Y-tau} and  \eqref{Y^T-Y-2} in the proof of Lemma  \ref{LemmaA-1}, there exists a constant $C>0$, independent of $(u,v)$, $(t,x)$ and $T,$ $T'$, such that
\bel{Sec4:Y^T-Y-1}\ba{ll}
\ns\ds
\sup_{s\in[t,S]}|Y_s^{T,t,x;u,v}-Y_s^{T',t,x;u,v}|
\les
Ce^{-\rho_0 ((T\wedge T')-S )}
 ,\q  \dbP\mbox{-a.s.},  \\
\ns\ds \sup_{s\in[t,S]}|Y_s^{T,t,x;u,v}-Y_s^{t,x;u,v}|
\les
Ce^{-\rho_0(T-S)}
 ,\q  \dbP\mbox{-a.s.}
\ea\ee

\no\emph{Step 1.} We first show that the lower value function $\underline W$ is deterministic, and belongs to $C_b([0,\i)\times\dbR^n)$,
and that $W^T$ converges to $\underline W$ uniformly on $[0,S]\times\mathbb R^n$,
for each $S>0$.
To prove this, we first observe that
$$
W^T(t,x)
=
\essinf_{\beta\in\mathcal B_{t,T}}
\esssup_{u\in\mathcal U_{t,T}}
Y_t^{T,t,x;u,\beta(u)}
=
\essinf_{\beta\in\mathcal B_{t,\infty}}
\esssup_{u\in\mathcal U_{t,\infty}}
Y_t^{T,t,x;u,\beta(u)},
\quad (t,x)\in[0,T]\times\mathbb R^n,
$$
whose proof is standard; see \cite[Lemma 3.13]{Li-Li-Wei-2021} for similar arguments. It follows from the fact that the finite horizon payoff
$Y_t^{T,t,x;u,\beta(u)}$ depends only on the restrictions of $u$ and $\beta(u)$ to $[t,T]$.
Indeed, every $\beta\in\mathcal B_{t,\infty}$ induces, by restriction, a strategy
$\beta^T\in\mathcal B_{t,T}$, while every $\beta^T\in\mathcal B_{t,T}$ can be extended to some
$\tilde\beta\in\mathcal B_{t,\infty}$. Since admissible controls are required only to be progressively  measurable,
and admissible strategies only to be nonanticipative, these restriction and extension operations are well defined.

Using the first estimate in \eqref{Sec4:Y^T-Y-1}, together with the definitions of essential
supremum and infimum, we have, for any $T,T'>S$ and
$(t,x)\in[0,S]\times\mathbb R^n$,
$$
|W^T(t,x)-W^{T'}(t,x)|
\les
C e^{-\rho_0((T\wedge T')-S)}.
$$
Hence $\{W^T\}_{T>S}$ is a Cauchy family uniformly on
$[0,S]\times\mathbb R^n$. Since $W^T$ is deterministic and continuous
for each $T>S$, there exists a deterministic continuous function
$w^S$ on $[0,S]\times\mathbb R^n$ such that
$$
\lim_{T\to\infty}
\sup_{(t,x)\in[0,S]\times\mathbb R^n}
|W^T(t,x)-w^S(t,x)|=0.
$$
Moreover, if $0<S_1<S_2$, then $w^{S_1}=w^{S_2}$ on
$[0,S_1]\times\mathbb R^n$. Since $S>0$ is arbitrary, these local limits
define a deterministic continuous function $w$ on
$[0,\infty)\times\mathbb R^n$.

Next, using the second estimate in \eqref{Sec4:Y^T-Y-1}, again together with the definitions
of essential supremum and infimum, we obtain, for any $T>S$ and
$(t,x)\in[0,S]\times\mathbb R^n$,
$$
|W^T(t,x)-\underline W(t,x)|
\les
C e^{-\rho_0(T-S)},\qquad \mathbb P\text{-a.s.}
$$
Letting $T\to\infty$ yields
$
\underline W(t,x)=w(t,x),$ $ \mathbb P\text{-a.s.}
$
Therefore, $\underline W(t,x)$ is deterministic  for each $(t,x)$. Identifying $\underline W$
with its deterministic continuous version $w$, we further obtain
$$
\lim_{T\to\infty}
\sup_{(t,x)\in[0,S]\times\mathbb R^n}
|W^T(t,x)-\underline W(t,x)|=0.
$$
Finally, by the boundedness estimate in \eqref{W-bound}, $\underline W\in C_b([0,\infty)\times\mathbb R^n)$.
This proves the desired statement.

 \ms 
 
 \no \emph{Step 2.}  We verify  the boundary conditions for $\underline{W}$.
By the boundedness of $\underline{W}$
 and {\bf(A3)},
we have
$$
\sup_{x\in\mathbb R^n}|\Gamma_{0,t}\, \underline{W}(t,x)|
\les
e^{-\rho_0 t}\sup_{x\in\mathbb R^n}|\underline{W}(t,x)|\les
e^{-\rho_0 t}M_1^\i
 .
$$
 Hence
$ \ds
\lim\limits_{t\to\infty}\Gamma_{0,t}\underline W(t,x)=0,
$ uniformly in $x\in\mathbb R^n.
$ 
 
 If, in addition, $\beta_2=0$, then {\bf(H3)} becomes
$$
|g(s,x,0,0,u,v)|\les \beta_1(s) ,
\qquad \forall (s,x,u,v)\in [0,\infty)\times \mathbb{R}^n\times U\times V.
$$
 Let $(u,v)\in \cU_{t,\infty}\times \cV_{t,\infty}$ be arbitrary. We write the solution of the infinite horizon BSDE \eqref{BSDE-NA} as
 $(Y_s,Z_s):=(Y_s^{t,x;u,v},Z_s^{t,x;u,v})$ for simplicity. Arguing as in Proposition  \ref{Pro-A.3} (via linearization with respect to $z$ and a Girsanov transformation),   BSDE \eqref{BSDE-NA} can be rewritten as
$$
Y_s
=Y_T+
\int_s^T \big((a_r-\rho(r))Y_r+\langle b_r,Z_r\rangle+g(r,X_r,0,0,u_r,v_r)\big)\,\md r
-\int_s^T Z_r\,\md B_r, \ \forall 0\les s\les T<\i,
$$
where
$$\ba{ll}
\ns\ds a_r:=
\begin{cases}
\dfrac{g(r,X_r,\rho(r)Y_r,Z_r,u_r,v_r)-g(r,X_r,0,Z_r,u_r,v_r)}{Y_r}, & Y_r\neq0,\\[2mm]
0,& Y_r=0,
\end{cases}\\
\ns\ds  b_r:=
\begin{cases}
\dfrac{g(r,X_r,0,Z_r,u_r,v_r)-g(r,X_r,0,0,u_r,v_r)}{|Z_r|^2}\,Z_r, & \q \ Z_r\neq0,\\[2mm]
0,& \q \  Z_r=0.
\end{cases}
\ea$$
and $
a_r\les 0,$
$
|b_r|\les L_z
$, for a.e. $r\ges t$.

After the Girsanov transform,  we can introduce a probability measure $\mathbb Q$ such that
$$\ba{ll}
\ns\ds
|Y_t^{t,x;u,v}|
=
\Big|\mathbb E_t^{\mathbb Q}\[
\int_t^\infty
\exp\(\int_t^s \big(a_r-\rho(r)\big)\,\md r\)
g(s,X_s,0,0,u_s,v_s)\,\md s
\]\Big|\\
\ns\ds\qq\qq \les
\int_t^\infty \Gamma_{t,s}\,\beta_1(s)\,\md s
\les
\int_t^\infty \beta_1(s)\,\md s  \to 0,
\qquad \mbox{as }t\to\infty ,
\ea$$
 uniformly in $(x,u,v)$.
Then,
$ \lim\limits_{t\to\i}
 \underline{W}(t,x )
=  0,
$
uniformly in $x\in\mathbb{R}^n$.

 \ms
  
\no \emph{Step 3.} We show that  $\underline{W}$ is a viscosity solution of \eqref{HJBI-NA-L}. For the supersolution, suppose that $\varphi\in C_{l,b}^{3}([0,\i)\times\mathbb{R}^n)$ and $(t_0,x_0)\in[0,\i)\times\mathbb{R}^n$ are such that $\underline{W}-\varphi$ attains its local minimum at $(t_0,x_0)$. Without loss of generality, we may assume $\underline{W}(t_0,x_0)=\varphi(t_0,x_0)$.

Using Step 1,  the continuity of $W^T$, and  a standard stability argument for local minima, we can find sequences
$\{T_k \}_{k\ges 1}$ and  $\{(t_k,x_k)\}_{k\ges 1}$  with $T_k\to\infty$, and $(t_k,x_k)\to(t_0,x_0)$ as $k\to\infty$
such that, up to a subsequence if necessary, the following hold,

 (i)  $W^{T_k}-\varphi\ges W^{T_k}(t_k,x_k)-\varphi(t_k,x_k)$   in a neighbourhood of $(t_k,x_k)$, for all $k\ges 1$;

(ii)  $W^{T_k}(t_k,x_k)\to \underline{W}(t_0,x_0)$, as  $k\to\i$.

\ss

Since $W^T$ is a viscosity supersolution to \eqref{approximating finite horizon HJBI equation}, for each $k$,  $(t_k,x_k)\in[0,T_k)\times\mathbb{R}^n$, we have
$$\frac{\partial}{\partial t}\varphi(t_k,x_k)-\rho(t_k)W^{T_k}(t_k,x_k)
 +\underline{H}\big(t_k,x_k,W^{T_k}(t_k,x_k),D\f(t_k,x_k),D^2\f (t_k,x_k)\big)\les0.
       $$
Letting $k\to\i$,  using  (ii), the continuity of $\underline{H}$ and $\rho$, we obtain
                 \begin{equation*}\label{}
  \ba{ll}
 \ns\ds\frac{\partial}{\partial t}\varphi(t_0,x_0)-\rho(t_0) \underline{W}(t_0,x_0) +\underline{H}\big(t_0,x_0,\underline{W}(t_0,x_0),D\f(t_0,x_0),
 D^2\f(t_0,x_0)\big) \les0 .
   \ea   \end{equation*}
 Hence, combined with Step 2, $\underline W$ is a viscosity supersolution of HJBI equation \eqref{HJBI-NA-L}
on $[0,\infty)\times\mathbb R^n$ in the sense of Definition
\ref{Def-viscosity-solution-NA}.  By a similar argument, one shows
that $\underline W$ is also a viscosity subsolution of \eqref{HJBI-NA-L}
on $[0,\infty)\times\mathbb R^n$. Consequently, $\underline W$ is a
viscosity solution of \eqref{HJBI-NA-L} on $[0,\infty)\times\mathbb R^n$.

 \ss

\no\emph{Step 4.}
The uniqueness  follows from the comparison theorem stated in Theorem \ref{Comparison principle} below.
  \end{proof}

  \bt\label{Comparison principle}\sl
  Assume that {\bf (A3)} and {\bf(H1)}--{\bf(H4)} hold. Let
$W_1,W_2\in C_b([0,\infty)\times\mathbb R^n)$ be, respectively, a
viscosity subsolution and a viscosity supersolution of \eqref{HJBI-NA-L}
on $[0,\infty)\times\mathbb R^n$. Then
$
W_1(t,x)\les W_2(t,x),
$ $ (t,x)\in [0,\infty)\times\mathbb{R}^n.
$

%
%
%
%

   \et

  \begin{proof}
 For $ i=1,2,$  we define
$
\widetilde W_i(t,x):=\Gamma_{0,t}\, W_i(t,x),
$ $(t,x)\in [0,\i)\times\dbR^n.$
Since $W_1,W_2\in C_b([0,\infty)\times\mathbb R^n)$ and
$\rho(t)\ges \rho_0>0$, we have
$
\lim\limits_{t\to\infty}\widetilde W_i(t,x)=0,
$ uniformly in $x\in\mathbb R^n,$ $i=1,2.
$
So, for any  $\varepsilon>0$, there exists
$T_\varepsilon>0$ such that, for all $(t,x)\in[T_\varepsilon,\infty)
\times\mathbb R^n$,
$
\widetilde W_1(t,x)\les \varepsilon,
$ $
\widetilde W_2(t,x)\ges -\varepsilon.
$
Hence
\bel{wt-W12}
\widetilde W_1(t,x)\les \widetilde W_2(t,x)+2\varepsilon,
\qquad (t,x)\in[T_\varepsilon,\infty)\times\mathbb R^n.
\ee

A direct computation shows that $\widetilde W_1$ and $\widetilde W_2$
are, respectively, a bounded viscosity subsolution and a bounded
viscosity supersolution of the following  transformed equation
\bel{3-wt-W}
\widetilde W_t+
\widetilde H(t,x,(\widetilde W,D\widetilde W,D^2\widetilde W)(t,x))=0,
\qquad (t,x)\in[0,\infty)\times\mathbb R^n,
\ee
where
$
\widetilde H(t,x,r,p,A)
:=
\Gamma_{0,t}\,
\underline H\(t,x,\frac r{\Gamma_{0,t}},
\frac p{\Gamma_{0,t}},
\frac A{\Gamma_{0,t}}\).
$
Then, $\wt W_1$  is clearly a bounded upper semicontinuous viscosity subsolution of \eqref{3-wt-W} on $[0,T_\varepsilon]\times\mathbb{R}^n$. We also note that  $\widetilde W_2+2\varepsilon$ is still a bounded
continuous  supersolution of \eqref{3-wt-W} on $[0,T_\varepsilon]\times\mathbb R^n$  due to  the non-increasing of
$r\mapsto \widetilde H(t,x,r,p,A)$ implied by {\bf (H2)}.

  Applying the finite horizon  comparison principle  on
$[0,T_\varepsilon]\times\mathbb R^n$ (refer to \cite{Buckdahn-Li-2008}), we obtain
$
\widetilde W_1(t,x)
\les
\widetilde W_2(t,x)+2\varepsilon,
$ $ (t,x)\in[0,T_\varepsilon]\times\mathbb R^n.
$
Combining this with \eqref{wt-W12}, we get
$$
\widetilde W_1(t,x)
\les
\widetilde W_2(t,x)+2\varepsilon,
\qquad (t,x)\in[0,\infty)\times\mathbb R^n.
$$
Letting $\varepsilon\downarrow0$ yields
$
\widetilde W_1(t,x)\les \widetilde W_2(t,x),
$ $ (t,x)\in[0,\infty)\times\mathbb R^n.
$
Since $\Gamma_{0,t}>0$, it follows that
$
W_1(t,x)\les W_2(t,x),
$ $ (t,x)\in[0,\infty)\times\mathbb R^n.
$
This completes the proof.
  \end{proof}

\begin{remark}\sl
    The proof of Theorem  \ref{Comparison principle} may alternatively be interpreted
through the discounted time transformation
$
\tau=\Gamma_{0,t}.
$
For $W\in C_b([0,\infty)\times\mathbb R^n)$, the transformed function
$$
\widehat W(\tau,x):=\tau W(t(\tau),x)
$$
satisfies $\widehat W(\tau,x)\to0$ uniformly in $x$ as
$\tau\downarrow0$. Hence the boundary condition at infinity becomes a
boundary condition at the finite endpoint $\tau=0$. One can then apply
the finite horizon comparison principle on $[\delta,1]\times\mathbb R^n$
and let $\delta\downarrow0$. This gives the same comparison result as
Theorem \ref{Comparison principle}.

 \er

  \begin{corollary}\label{Cor-upper}\sl
Under assumptions {\bf (A3)} and  {\bf(H1)}--{\bf(H4)}, all the conclusions of Theorems \ref{Th-NA} and \ref{Comparison principle} remain valid for the upper value function $\overline{W}$. More precisely, $\overline W$ is deterministic and is the
unique viscosity solution in  $C_b([0,\infty)\times\mathbb R^n)$ of the following upper HJBI equation
$$\left\{\2n
\ba{ll}
\ns\ds 
\frac{\partial}{\partial t}\overline{W}(t,x)
-\rho(t)\overline{W}(t,x)
+\overline{H}\bigl(t,x,(\overline{W},D\overline{W},D^2\overline{W})(t,x)\bigr)
=0,
\quad
(t,x)\in [0,\infty)\times \mathbb{R}^n,\\
\ns\ds \lim\limits_{T\to\infty}\Gamma_{0,T}\overline W(T,x)=0,
  \mbox{ uniformly in } x\in\mathbb R^n,\\
\ea\right.$$
where,  for 
$(t,x,y,p,A)\in [0,\infty)\times\mathbb R^n\times\mathbb R
\times\mathbb R^n\times\mathbb S^n$,     
$$
\overline  H(t,x,y,p,A)
:=
\inf_{v\in V}\sup_{u\in U}
\Big\{
\frac12\operatorname{tr}
\big(\sigma\sigma^\top(t,x,u,v)A\big)
+b(t,x,u,v)\cdot p
+g\big(t,x,\rho(t)y,p\sigma(t,x,u,v),u,v\big)
\Big\}.
$$  
Moreover, if, in addition, $\beta_2=0$,  then the above boundary condition can   be
strengthened to
$
\lim\limits_{T\to\infty}\overline W(T,x)=0,
$ uniformly in $x\in\mathbb R^n.
$
 
\end{corollary}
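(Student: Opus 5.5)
The plan is to reduce the corollary to the lower case by a symmetry, or failing that to rerun the proofs of Theorems \ref{Th-NA} and \ref{Comparison principle} with the roles of the players exchanged. I expect the rerun to be the route actually taken. The only ingredients specific to $\underline W$ in those proofs are three.

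\emph{First ingredient: the finite horizon approximations.} Set
$$
\overline W^T(t,x):=\esssup_{\a\in\cA_{t,T}}\essinf_{v\in\cV_{t,T}}Y_t^{T,t,x;\a(v),v}.
$$
By \cite{Buckdahn-Li-2008}, $\overline W^T$ is deterministic and is the unique viscosity solution of \eqref{approximating finite horizon HJBI equation} with $\underline H$ replaced by $\overline H$. We also need the identity
$$
\overline W^T(t,x)=\esssup_{\a\in\cA_{t,\infty}}\essinf_{v\in\cV_{t,\infty}}Y_t^{T,t,x;\a(v),v}.
$$
Its proof is the restriction/extension argument used for $\mathcal B$, now applied to $\mathcal A$.

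\emph{Second ingredient: the estimates \eqref{Sec4:Y^T-Y-1}.} These are uniform in $(u,v)$ and do not depend on which player uses a strategy. Applying them inside the esssup--essinf gives
$$
|\overline W^T(t,x)-\overline W^{T'}(t,x)|\les Ce^{-\rho_0((T\wedge T')-S)},\qquad |\overline W^T(t,x)-\overline W(t,x)|\les Ce^{-\rho_0(T-S)}
$$
on $[0,S]\times\dbR^n$. Step 1 then shows that $\overline W$ is deterministic, continuous, and the locally uniform limit of $\overline W^T$. Boundedness by $M_1^\infty$ follows from \eqref{4.5}.

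\emph{Boundary behaviour.} Step 2 carries over verbatim, since the bound on $|Y_t^{t,x;u,v}|$ there is uniform in $(u,v)$. This gives the discounted limit at infinity. When $\beta_2=0$, the bound $\int_t^\infty\beta_1(s)\,\md s$ gives the undiscounted limit.

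\emph{Third ingredient: stability and comparison for $\overline H$.} Step 3 uses only two facts. The first is that the $\overline W^T$ are viscosity solutions converging locally uniformly. The second is that $\overline H$ is continuous, which holds by compactness of $U,V$ and (H1)--(H2). So Step 3 gives the viscosity solution property. For the comparison proof, two properties of $\overline H$ are needed:
\begin{itemize}
\item[(a)] $r\mapsto\widetilde H(t,x,r,p,A)$ is non-increasing. This holds because inf--sup preserves the monotonicity in $y$ coming from (H2).
\item[(b)] The finite horizon comparison of \cite{Buckdahn-Li-2008} applies. It holds for both $\underline H$ and $\overline H$, since the structural conditions are taken uniformly over $(u,v)$.
\end{itemize}
Uniqueness in $C_b$ then follows exactly as before.

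\emph{Possible shortcut by symmetry.} Apply the lower-value results to the game with $g$ replaced by $-g$ and the players' roles exchanged, so that Player 2 uses strategies against controls of Player 1. This gives $\overline W=-\underline W'$, and $-\underline H'(t,x,-y,-p,-A)=\overline H(t,x,y,p,A)$. The sign flip, however, also flips the $z$-argument and the monotonicity in $y$. This has to be checked carefully: $\tilde g(y,z):=-g(-y,-z)$ is still monotone and Lipschitz. If this check works, the corollary is immediate; otherwise I fall back on the direct rerun above.

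\emph{Main obstacle.} I expect the only delicate point to be the restriction/extension identity for $\mathcal A_{t,\infty}$ versus $\mathcal A_{t,T}$, i.e.\ extending a finite horizon nonanticipative strategy beyond $T$ while keeping it nonanticipative. I would first try the extension $\tilde\a(v):=\a(v|_{[t,T]})\mathbf 1_{[t,T]}+u_0\mathbf 1_{(T,\infty)}$.
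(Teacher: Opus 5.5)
Your proposal is correct and matches the paper, which gives no separate proof of this corollary and only states that the results for $\overline W$ follow by rerunning the lower-value arguments (finite horizon approximation via \cite{Buckdahn-Li-2008}, the uniform estimates \eqref{Sec4:Y^T-Y-1}, limit stability, and the discounted comparison argument) with the players' roles exchanged. The symmetry shortcut you hedged on also works: $\tilde g(t,x,y,z,v,u):=-g(t,x,-y,-z,u,v)$ inherits (H2)--(H3) with the same constants, including $\beta_2$, and (H4) is unchanged; moreover your extension $\tilde\alpha$ is indeed nonanticipative, since two controls $v_1\equiv v_2$ on $\llbracket t,S\rrbracket$ also agree on $\llbracket t,S\wedge T\rrbracket$.
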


\begin{corollary}\sl
Assume further that the Isaacs condition holds, namely,
$$
\underline{H}(t,x,y,p,X)=\overline{H}(t,x,y,p,X),
\quad
(t,x,y,p,X)\in [0,\infty)\times\mathbb{R}^n\times\mathbb{R}\times\mathbb{R}^n
\times\cS^n.
$$
Then the stochastic differential game has a value. More precisely,
$
\underline{W}(t,x)=\overline{W}(t,x)=:W(t,x),
$
$(t,x)\in [0,\infty)\times \mathbb{R}^n,
$
where $W$ is the value function of the game.

\end{corollary}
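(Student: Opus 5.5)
The plan is to combine the two characterizations already proved: Theorem \ref{Th-NA} for $\underline W$ and Corollary \ref{Cor-upper} for $\overline W$. By these results, $\underline W$ is the unique viscosity solution in $C_b([0,\infty)\times\mathbb R^n)$ of \eqref{HJBI-NA-L} with Hamiltonian $\underline H$. Likewise, $\overline W$ is the unique viscosity solution in the same class of the upper HJBI equation with Hamiltonian $\overline H$, and both equations carry the same discounted boundary condition at infinity.

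Under the Isaacs condition $\underline H=\overline H$, the two equations coincide identically: same time-derivative term, same discount term $-\rho(t)W$, same Hamiltonian, and same boundary condition $\lim_{T\to\infty}\Gamma_{0,T}W(T,x)=0$ uniformly in $x$. The notions of viscosity sub- and supersolution in Definition \ref{Def-viscosity-solution-NA} depend only on the Hamiltonian and the boundary condition. Hence $\overline W$, which is in $C_b([0,\infty)\times\mathbb R^n)$ by Corollary \ref{Cor-upper}, is also a viscosity solution of \eqref{HJBI-NA-L}.

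Next I would apply the comparison principle, Theorem \ref{Comparison principle}, in both directions. Take $W_1=\underline W$ as a subsolution and $W_2=\overline W$ as a supersolution to get $\underline W\les\overline W$; then swap the roles to get $\overline W\les\underline W$. Alternatively, the uniqueness part of Theorem \ref{Th-NA} gives $\underline W=\overline W$ directly. Defining $W:=\underline W=\overline W$, this common function is the value of the game.

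There is essentially no obstacle. The only point to check is that the comparison theorem uses only properties of the Hamiltonian that $\underline H$ and $\overline H$ share. These are continuity, inherited from (H1)--(H2), and monotonicity in $y$, inherited from the monotonicity of $g$ and $\rho\ges0$. Both are preserved under equality of the Hamiltonians, so Theorem \ref{Comparison principle} applies verbatim.
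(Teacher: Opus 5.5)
Your proposal is correct and is the argument the paper intends; the paper states the corollary without proof as an immediate consequence of Theorem \ref{Th-NA}, Corollary \ref{Cor-upper} and the comparison principle. Under the Isaacs condition both value functions are bounded viscosity solutions of the same equation \eqref{HJBI-NA-L}, so Theorem \ref{Comparison principle}, applied in both directions, gives $\underline W=\overline W$. Your last paragraph is unnecessary and slightly undersells what comparison requires (it also uses the $x$-structure coming from Lipschitz $b,\sigma,g$), but this does not matter: once $\underline H=\overline H$ pointwise, $\overline W$ is literally a solution of \eqref{HJBI-NA-L}, so nothing has to be transferred between the two Hamiltonians.
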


Finally, we show that, when the coefficients are time-homogeneous and the
discount factor is constant, the present non-autonomous formulation reduces to,
and is consistent with, the classical stationary infinite horizon case.

\begin{corollary}
\label{cor-time-homogeneous}\sl
Assume that the coefficients in \eqref{state-NA} and \eqref{BSDE-NA} are
independent of the time variable and that the discount factor is constant,
namely,
\bel{Indep-t}
b=b(x,u,v),\quad
\sigma=\sigma(x,u,v),\quad
g=g(x,y,z,u,v),\quad
\rho(t)\equiv \rho>0.
\ee
Then the following assertions hold.

{\rm(i)} The lower value function $
\underline W $  introduced in \eqref{NA-uW} does not  depend 
on the  time variable, denote by  $\underline w$. 
  Moreover, $\underline w$ is the unique bounded viscosity solution of the
stationary lower HJBI equation
\bel{S-LHJBI}
-\rho  w(x)
+
\sup_{u\in U}\inf_{v\in V}
\big\{
\mathcal L^{u,v}  w(x)
+
g\bigl(x,\rho w(x),
D  w(x)\sigma(x,u,v),u,v\bigr)
\big\}
=0,
\qquad x\in\mathbb R^n,
\ee
where, for $(u,v)\in U\times V$,  
 $\ds
\mathcal L^{u,v}\phi(x)
:=
\frac12\operatorname{tr}
\bigl(\sigma\sigma^\top(x,u,v)D^2\phi(x)\bigr)
+
b(x,u,v)\cdot D\phi(x).
$ 
The comparison principle for  viscosity subsolutions and supersolutions
of \eqref{S-LHJBI} also holds in  $C_b(\mathbb R^n)$. 

{\rm(ii)}The upper value function $
\overline W$ in \eqref{NA-oW}
is independent of the time variable, denoted by $\overline w$. 
Moreover, $\overline w$ is the unique bounded viscosity solution of the
stationary upper HJBI equation
\bel{S-UHJBI}
-\rho  w(x)
+
\inf_{v\in V}\sup_{u\in U}
\big\{
\mathcal L^{u,v} w(x)
+
g\bigl(x,\rho  w(x),
D w(x)\sigma(x,u,v),u,v\bigr)
\big\}
=0,
\qquad x\in\mathbb R^n.
\ee
The comparison principle holds for   viscosity
subsolutions and supersolutions of  \eqref{S-UHJBI} in  $C_b(\mathbb R^n)$.
\end{corollary}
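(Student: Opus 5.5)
The plan is to argue from the PDE side, using uniqueness in Theorem~\ref{Th-NA} and the comparison principle Theorem~\ref{Comparison principle}, and not a probabilistic shift of the Brownian motion. Under \eqref{Indep-t}, assumptions {\bf(A3)} and {\bf(H1)}--{\bf(H4)} still hold, with $\beta_1\equiv0$ and $\beta_2=\sup_{x,u,v}|g(x,0,0,u,v)|$. So Theorem~\ref{Th-NA} applies: $\underline W\in C_b([0,\infty)\times\mathbb R^n)$ is the unique bounded viscosity solution of \eqref{HJBI-NA-L}. Now, however, $\underline H(t,x,y,p,A)=\underline H(x,y,p,A)$ does not depend on $t$, and $\rho$ is constant.

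\emph{Step 1: time-shift invariance.} Fix $h>0$ and set $W^h(t,x):=\underline W(t+h,x)$. Since the equation is autonomous, $W^h$ is a viscosity solution of the same PDE. Indeed, a test function $\varphi$ touching $W^h$ at $(t_0,x_0)$ gives $\varphi(\cdot-h,\cdot)$ touching $\underline W$ at $(t_0+h,x_0)$, and the inequalities are unchanged because the data do not depend on $t$. The boundary condition also transfers: $\Gamma_{0,T}W^h(T,x)=e^{-\rho T}\underline W(T+h,x)\to0$ uniformly, since $W^h$ is bounded. By the uniqueness in Theorem~\ref{Th-NA}, $W^h=\underline W$ for every $h>0$. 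Hence $\underline W(t,x)=\underline W(0,x)=:\underline w(x)$, and $\underline w\in C_b(\mathbb R^n)$.

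\emph{Step 2: stationary equation.} Take $\phi\in C^3_{l,b}(\mathbb R^n)$ such that $\underline w-\phi$ has a local maximum (resp.\ minimum) at $x_0$. Then $\varphi(t,x):=\phi(x)$ is an admissible test function for $\underline W$ at $(t_0,x_0)$, for any $t_0$, and $\partial_t\varphi=0$. Definition~\ref{Def-viscosity-solution-NA} then gives exactly the sub-/supersolution inequalities of \eqref{S-LHJBI}.

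\emph{Step 3: comparison for \eqref{S-LHJBI}.} Let $w_1,w_2\in C_b(\mathbb R^n)$ be a bounded viscosity subsolution and supersolution of \eqref{S-LHJBI}. Lift them to $W_i(t,x):=w_i(x)$. The main obstacle is checking that $W_1$ is a parabolic subsolution: when $W_1-\varphi$ has a local max at $(t_0,x_0)$ for a general $\varphi(t,x)$, the function $\varphi(t_0,\cdot)$ is a valid stationary test at $x_0$, but we must also show $\partial_t\varphi(t_0,x_0)=0$ (or has the favorable sign).

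I would handle this as follows. For $t_0>0$, the map $t\mapsto W_1(t,x_0)-\varphi(t,x_0)=w_1(x_0)-\varphi(t,x_0)$ has a local max at an interior point $t_0$, which forces $\partial_t\varphi(t_0,x_0)=0$. Then the stationary inequality at $x_0$ with test function $\varphi(t_0,\cdot)$ yields the parabolic one. For $t_0=0$, the max in $t$ is one-sided, so only $\partial_t\varphi(0,x_0)\ge0$ for the subsolution (resp.\ $\le0$ for the supersolution). Both signs go in the favorable direction, so the required inequalities still hold.

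The boundary conditions $e^{-\rho T}w_i\to0$ are trivial by boundedness. Theorem~\ref{Comparison principle} then gives $W_1\le W_2$, hence $w_1\le w_2$. This also yields uniqueness of bounded solutions of \eqref{S-LHJBI}, so $\underline w$ is the unique one. Part (ii) is identical, using Corollary~\ref{Cor-upper} and $\overline H$ in place of $\underline H$.
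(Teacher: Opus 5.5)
Your proposal is correct and follows the paper's proof essentially step by step. Time-shift invariance of the autonomous equation together with uniqueness (Theorem~\ref{Comparison principle}) gives $\underline W(\cdot+h,\cdot)=\underline W$, constant-in-time test functions yield \eqref{S-LHJBI}, and lifting $w_i$ to $W_i(t,x)=w_i(x)$ transfers the parabolic comparison principle to the stationary equation. Your explicit sign analysis of $\partial_t\varphi(t_0,x_0)$ in Step~3, including the one-sided case $t_0=0$, fills in a check the paper only asserts.

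The same one-sided issue at $t_0=0$ also arises in your Step~1, where the shifted test function touches $\underline W$ at the interior point $(h,x_0)$ only from $s\ge h$; the paper's argument has the same feature. It is harmless: for this backward equation, the viscosity inequalities at $t=0$ follow from those at $t>0$ by the standard $\varphi\pm\varepsilon/t$ perturbation.
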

 
\begin{proof}
We only prove assertion (i). Assertion (ii) follows in the same way.
Under condition \eqref{Indep-t}, the lower HJBI equation \eqref{HJBI-NA-L}  reduces to
\begin{equation}\label{HJBI-time-homogeneous-parabolic}
\partial_t W(t,x)-\rho W(t,x)
+\sup_{u\in U}\inf_{v\in V}
\left\{
\mathcal L^{u,v}W(t,x)
+g\bigl(x,\rho W(t,x),DW(t,x)\sigma(x,u,v),u,v\bigr)
\right\}=0 .
\end{equation}
\emph{ Step 1.} We first show that $\underline W$ is independent of the   time variable. To this end, for
each $h\ges0$, define
$$
\underline W^h(t,x):=\underline W(t+h,x),
\qquad (t,x)\in[0,\infty)\times\mathbb R^n.
$$
Then
\bel{W-h-BC}
\sup_{x\in\mathbb R^n}
\big|e^{-\rho T}\underline W^h(T,x)\big|
\les
e^{\rho h}
\sup_{x\in\mathbb R^n}
\big|e^{-\rho(T+h)}\underline W(T+h,x)\big|
\to 0,
\quad \mbox{as } T\to\infty.
\ee
We claim that $\underline W^h$ is again a  viscosity solution of
\eqref{HJBI-time-homogeneous-parabolic} in $C_b([0,\i)\times \dbR^n)$. 
Indeed,   $\underline W^h\in C_b([0,\i)\times \dbR^n)$ is obvious. We only verify the viscosity inequalities. Let
$\varphi\in C^{1,2}([0,\infty)\times\mathbb R^n)$ and suppose that
$\underline W^h-\varphi$ attains a local maximum at $(t_0,x_0)$. Define, in a
neighbourhood of $(t_0+h,x_0)$,
$ 
\psi(s,x):=\varphi(s-h,x).
$ 
Then $\underline W-\psi$ attains a local maximum at $(t_0+h,x_0)$. Since
$\underline W$ is a viscosity subsolution of \eqref{HJBI-time-homogeneous-parabolic},
we have
$$\ba{ll}
\ns\ds 
\partial_s\psi(t_0+h,x_0)
-\rho \underline W(t_0+h,x_0)\\
\ns\ds 
+\sup_{u\in U}\inf_{v\in V}
\left\{
\mathcal L^{u,v}\psi(t_0+h,\cdot)(x_0)
+g\bigl(x_0,\rho\underline W(t_0+h,x_0),
D\psi(t_0+h,x_0)\sigma(x_0,u,v),u,v\bigr)
\right\}
\ges 0 .
\ea$$
By the definition of $\psi$ and $\underline W^h$, this is exactly
$$\ba{ll}
\ns\ds 
\partial_t\varphi(t_0,x_0)
-\rho \underline W^h(t_0,x_0)\\
\ns\ds 
+\sup_{u\in U}\inf_{v\in V}
\big\{
\mathcal L^{u,v}\varphi(t_0,\cdot)(x_0)
+g\bigl(x_0,\rho\underline W^h(t_0,x_0),
D\varphi(t_0,x_0)\sigma(x_0,u,v),u,v\bigr)
\big\}
\ges 0 .
\ea$$
Hence, combined with the boundary condition \eqref{W-h-BC},  $\underline W^h$ is a viscosity subsolution of \eqref{HJBI-time-homogeneous-parabolic}.
Similarly, we can prove the supersolution inequality. Therefore
$\underline W^h$ is a  viscosity solution of
\eqref{HJBI-time-homogeneous-parabolic} in  $C_b([0,\i)\times \dbR^n)$. 

By the uniqueness of the viscosity solution of \eqref{HJBI-time-homogeneous-parabolic} established in Theorem \ref{Comparison principle}, we get
$ 
\underline W^h(t,x)=\underline W(t,x),
$ for all $ t,$ $ h\ges0,\ x\in\mathbb R^n.
$  
Thus $\underline W$ is independent of $t$. 

\ms

\emph{Step 2.}
We denote
$ 
\underline w(x):=\underline W(0,x),$ $ x\in\mathbb R^n.
$ 
Since $\underline W(t,x)=\underline w(x)$, it's easy to check the parabolic equation
\eqref{HJBI-time-homogeneous-parabolic} reduces, in the viscosity sense, to
the stationary lower HJBI equation \eqref{S-LHJBI}.
It remains to prove the comparison principle for  \eqref{S-LHJBI}. 
Let $w_1,w_2\in C_b(\mathbb R^n)$ be, respectively, a viscosity subsolution
and a viscosity supersolution of \eqref{S-LHJBI}. Define
$$
W_i(t,x):=w_i(x),
\qquad (t,x)\in[0,\infty)\times\mathbb R^n,\quad i=1,2.
$$
Since
$w_1,w_2$ are bounded,
$ \ds
\lim_{T\to\infty}e^{-\rho T}W_i(T,x)=0,
$  uniformly in $x\in\mathbb R^n,$ $ i=1,2.
$ Moreover,  $W_1$ and $W_2$ are, respectively, a viscosity subsolution and a
viscosity supersolution of \eqref{HJBI-time-homogeneous-parabolic}. 
 Applying the comparison
principle  of \eqref{HJBI-time-homogeneous-parabolic} in Theorem \ref{Comparison principle}, we obtain
$$
W_1(t,x)\les W_2(t,x),
\qquad (t,x)\in[0,\infty)\times\mathbb R^n.
$$
Taking $t=0$ gives
$ 
w_1(x)\les w_2(x),
$ $ x\in\mathbb R^n.
$ 
Hence the comparison principle holds for \eqref{S-LHJBI}
in $C_b(\mathbb R^n)$, and uniqueness follows immediately.
\end{proof}

\begin{remark}\label{Re-stationarity-PDE-viewpoint}\sl
The stationarity of autonomous infinite horizon stochastic recursive control and  game problems  was indicated  in \cite{Li-Zhao-2019, Buckdahn-Li-Zhao-2021, Luo-Li-Wei-2025} by a
probabilistic shift argument. The preceding corollary gives a complementary
PDE-based derivation: the time-shifted function
$\underline W^h(t,x):=\underline W(t+h,x)$ solves the same non-autonomous HJBI
equation and satisfies the same boundary condition at infinity; by uniqueness,
$\underline W^h=\underline W$. Hence the value function is independent of the
initial time, and the stationary HJBI equation is recovered here as a
consequence of the non-autonomous PDE uniqueness result.
\end{remark}
 
 \section*{Acknowledgements}

The authors thank Professor Jiongmin Yong for helpful discussions and valuable
suggestions.


\begin{thebibliography}{99}

\bibitem{Basco-Frankowska-2019}
V. Basco and H. Frankowska,
Lipschitz continuity of the value function for the infinite horizon optimal
control problem under state constraints,
in \emph{Trends in Control Theory and Partial Differential Equations},
F. Alabau-Boussouira et al., eds., Springer INdAM Series 32,
Springer, Cham, 2019, 17--39.

\bibitem{Baumeister-Leitao-Silva-2007}
J. Baumeister, A. Leit{\~a}o, and G. N. Silva,
On the value function for nonautonomous optimal control problems with infinite
horizon,
\emph{Systems Control Lett.}, 56 (2007), 188--196.

\bibitem{Bielecki-Rutkowski-2004}
T. R. Bielecki and M. Rutkowski,
\emph{Credit Risk: Modeling, Valuation and Hedging},
Springer Finance, Springer, Berlin, 2004.

\bibitem{Briand-Confortola-2008}
P. Briand and F. Confortola,
Quadratic BSDEs with random terminal time and elliptic PDEs in infinite
dimension,
\emph{Electron. J. Probab.}, 13 (2008), 1529--1561.

\bibitem{Briand-Hu-1998}
P. Briand and Y. Hu,
Stability of BSDEs with random terminal time and homogenization of semilinear
elliptic PDEs,
\emph{J. Funct. Anal.}, 155 (1998), 455--494.

\bibitem{Buckdahn-Hu-Li-2011}
R. Buckdahn, Y. Hu, and J. Li,
Stochastic representation for solutions of Isaacs' type integral-partial
differential equations,
\emph{Stochastic Process. Appl.}, 121 (2011), 2715--2750.

\bibitem{Buckdahn-Li-2008}
R. Buckdahn and J. Li,
Stochastic differential games and viscosity solutions of
Hamilton--Jacobi--Bellman--Isaacs equations,
\emph{SIAM J. Control Optim.}, 47 (2008), 444--475.

\bibitem{Buckdahn-Li-Zhao-2021}
R. Buckdahn, J. Li, and N. Zhao,
Representation of limit values for nonexpansive stochastic differential games,
\emph{J. Differential Equations}, 276 (2021), 187--227.

\bibitem{Carlson-Haurie-Leizarowitz-1991}
D. A. Carlson, A. B. Haurie, and A. Leizarowitz,
\emph{Infinite Horizon Optimal Control: Deterministic and Stochastic Systems},
2nd ed., Springer, Berlin, 1991.

\bibitem{Crandall-Ishii-Lions-1992}
M. G. Crandall, H. Ishii, and P.-L. Lions,
User's guide to viscosity solutions of second order partial differential
equations,
\emph{Bull. Amer. Math. Soc.}, 27 (1992), 1--67.

\bibitem{Darling-Pardoux-1997}
R. W. R. Darling and {\'E}. Pardoux,
Backwards SDE with random terminal time and applications to semilinear elliptic
PDE,
\emph{Ann. Probab.}, 25 (1997), 1135--1159.

\bibitem{Dockner-Jorgensen-Long-Sorger-2000}
E. J. Dockner, S. J{\o}rgensen, N. V. Long, and G. Sorger,
\emph{Differential Games in Economics and Management Science},
Cambridge University Press, Cambridge, 2000.

\bibitem{Fleming-McEneaney-1995}
W. H. Fleming and W. M. McEneaney,
Risk-sensitive control on an infinite time horizon,
\emph{SIAM J. Control Optim.}, 33 (1995), 1881--1915.

\bibitem{Fleming-Sheu-2002}
W. H. Fleming and S.-J. Sheu,
Risk-sensitive control and an optimal investment model. II,
\emph{Ann. Appl. Probab.}, 12 (2002), 730--767.

\bibitem{Fleming-Soner-2006}
W. H. Fleming and H. M. Soner,
\emph{Controlled Markov Processes and Viscosity Solutions},
2nd ed., Stochastic Modelling and Applied Probability 25,
Springer, New York, 2006.

\bibitem{Fleming-Souganidis-1989}
W. H. Fleming and P. E. Souganidis,
On the existence of value functions of two-player, zero-sum stochastic
differential games,
\emph{Indiana Univ. Math. J.}, 38 (1989), 293--314.

\bibitem{Hu-Tessitore-2007}
Y. Hu and G. Tessitore,
BSDE on an infinite horizon and elliptic PDEs in infinite dimension,
\emph{NoDEA Nonlinear Differential Equations Appl.}, 14 (2007), 825--846.

\bibitem{Krylov-1980}
N. V. Krylov,
\emph{Controlled Diffusion Processes},
Applications of Mathematics 14, Springer, New York, 1980.

\bibitem{Li-Li-Wei-2021}
J. Li, W. Li, and Q. Wei,
Probabilistic interpretation of a system of coupled
Hamilton--Jacobi--Bellman--Isaacs equations,
\emph{ESAIM Control Optim. Calc. Var.}, 27 (2021), S17.

\bibitem{Li-Peng-2009}
J. Li and S. Peng,
Stochastic optimization theory of backward stochastic differential equations
with jumps and viscosity solutions of Hamilton--Jacobi--Bellman equations,
\emph{Nonlinear Anal.}, 70 (2009), 1776--1796.

\bibitem{Li-Wei-2014}
J. Li and Q. Wei,
Optimal control problems of fully coupled FBSDEs and viscosity solutions of
Hamilton--Jacobi--Bellman equations,
\emph{SIAM J. Control Optim.}, 52 (2014), 1622--1662.

\bibitem{Li-Wei-2015}
J. Li and Q. Wei,
Stochastic differential games for fully coupled FBSDEs with jumps,
\emph{Appl. Math. Optim.}, 71 (2015), 411--448.

\bibitem{Li-Zhao-2019}
J. Li and N. Zhao,
Representation of asymptotic values for nonexpansive stochastic control
systems,
\emph{Stochastic Process. Appl.}, 129 (2019), 634--673.

\bibitem{Lin-Ren-Touzi-Yang-2020}
Y. Lin, Z. Ren, N. Touzi, and J. Yang,
Second order backward SDE with random terminal time,
\emph{Electron. J. Probab.}, 25 (2020), article 99, 1--43.

\bibitem{Luo-Li-Wei-2025}
S. Luo, X. Li, and Q. Wei,
Infinite horizon stochastic recursive control problems with jumps: dynamic
programming and stochastic verification theorems,
\emph{SIAM J. Control Optim.}, 63 (2025), 796--821.

\bibitem{Nisio-1988}
M. Nisio,
Stochastic differential games and viscosity solutions of Isaacs equations,
\emph{Nagoya Math. J.}, 110 (1988), 163--184.

\bibitem{Pardoux-1998}
{\'E}. Pardoux,
Backward stochastic differential equations and viscosity solutions of systems
of semilinear parabolic and elliptic PDEs of second order,
in \emph{Stochastic Analysis and Related Topics VI},
Birkh{\"a}user, Boston, 1998, 79--127.

\bibitem{Pardoux-Peng-1990}
{\'E}. Pardoux and S. Peng,
Adapted solution of a backward stochastic differential equation,
\emph{Systems Control Lett.}, 14 (1990), 55--61.

\bibitem{Peng-1991}
S. Peng,
Probabilistic interpretation for systems of quasilinear parabolic partial
differential equations,
\emph{Stochastics Stochastics Rep.}, 37 (1991), 61--74.

\bibitem{Peng-1997}
S. Peng,
Backward stochastic differential equations--stochastic optimization theory and
viscosity solutions of HJB equations,
in \emph{Topics on Stochastic Analysis},
J. Yan, S. Peng, S. Fang, and L. Wu, eds.,
Science Press, Beijing, 1997, 85--138 (in Chinese).

\bibitem{Peskir-Shiryaev-2006}
G. Peskir and A. Shiryaev,
\emph{Optimal Stopping and Free-Boundary Problems},
Lectures in Mathematics, ETH Z{\"u}rich, Birkh{\"a}user, Basel, 2006.

\bibitem{Pham-2009}
H. Pham,
\emph{Continuous-Time Stochastic Control and Optimization with Financial
Applications},
Stochastic Modelling and Applied Probability 61,
Springer, Berlin, 2009.

\bibitem{Royer-2004}
M. Royer,
BSDEs with a random terminal time driven by a monotone generator and their
links with PDEs,
\emph{Stochastics Stochastics Rep.}, 76 (2004), 281--307.

\bibitem{Swiech-1996}
A. {\'S}wi\c{e}ch,
Another approach to the existence of value functions of stochastic differential
games,
\emph{J. Math. Anal. Appl.}, 204 (1996), 884--897.

\bibitem{Wei-Yu-2021}
Q. Wei and Z. Yu,
Infinite horizon forward-backward SDEs and open-loop optimal controls for
stochastic linear-quadratic problems with random coefficients,
\emph{SIAM J. Control Optim.}, 59 (2021), 2594--2623.

\bibitem{Wei-Yong-2025}
Q. Wei and J. Yong,
A time-inconsistent stochastic optimal control problem in an infinite time
horizon,
arXiv:2509.14495v1, 2025.

\bibitem{Yong-Zhou-1999}
J. Yong and X. Zhou,
\emph{Stochastic Controls: Hamiltonian Systems and HJB Equations},
Springer, New York, 1999.

\end{thebibliography}
\end{document}